\renewcommand{\H}{\mathbb{H}}
\newcommand{\C}{\mathbb{C}}
\newcommand{\G}{\mathbb{G}}
\newcommand{\bbG}{\mathbb G}
\newcommand{\N}{\mathbb{N}}
\newcommand{\R}{\mathbb{R}}
\newcommand{\cG}{\mathcal{G}}
\newcommand{\lan}{\langle}
\newcommand{\ran}{\rangle}
\newcommand{\lra}{\longrightarrow}
\newcommand{\der}{\partial}
\newtheorem{teo}{Theorem}[section]
\newtheorem{lem}{Lemma}[section]
\newtheorem{rem}{Remark}[section]
\newtheorem{defin}{Definition}
\newtheorem{prop}{Proposition}[section]
\newtheorem{Cor}{Corollary}[section]
\begin{document}

\title
[Characterizations of differentiability for h-convex functions]
{{\bf Characterizations of differentiability for h-convex functions in stratified groups }}
\author{Valentino Magnani}
\address{Valentino Magnani, Dipartimento di Matematica \\
Largo Bruno Pontecorvo 5 \\ I-56127, Pisa}
\email{magnani@dm.unipi.it}
\author{Matteo Scienza}
\address{Matteo Scienza, Dipartimento di Matematica \\
Largo Bruno Pontecorvo 5 \\ I-56127, Pisa}
\email{scienza@mail.dm.unipi.it}

\begin{abstract}
Using the notion of h-subdifferential, we characterize both first and second
order differentiability of h-convex functions in stratified groups.
We show that Aleksandrov's second order differentiability of h-convex functions
is equivalent to a suitable differentiability of their horizontal gradient.
\end{abstract}
\maketitle

\tableofcontents

\footnoterule{
The first author has been supported by ''ERC ADG Grant GeMeTneES'' \\
{\em Mathematics Subject Classification}: 32F17 (53C17, 26B05) \\
{\em Keywords:} subdifferential, stratified groups, second order differentiability}

\pagebreak

\section{Introduction}

Convexity in sub-Riemannian Geometry is a quite recent stream, that goes back 
to the works by Danielli, Garofalo and Nhieu \cite{DGN2} and by Lu, Manfredi and Stroffolini \cite{LMS}.
All details and precise definitions related to convexity in stratified groups will be deferred to Section~\ref{SectionBN}.

Different pointwise notions of convexity have been investigated in \cite{DGN2}. 
Among them, the most natural turned out to be that of weakly h-convex function, in short, 
{\em h-convex function}.
An h-convex function $u:\Omega\lra\R$ defined on an open set $\Omega$ of a stratified group $\G$
satisfies the property of being classically convex, when restricted to all horizontal lines contained in $\Omega$. These are exactly the integral curves of the horizontal vector fields of $\G$.
We wish to stress that this notion of convexity turns out to be ``local'' and it does 
not require any assumption on $\Omega$.
In fact, it is not difficult to observe that smooth h-convex functions are characterized
by an everywhere nonnegative {\em horizontal Hessian}.
This fits with the approach of \cite{LMS}, where the authors introduce v-convex functions 
as upper semicontinuous functions, whose horizontal Hessian is nonnegative in the viscosity sense.
Let us point out that the notions of v-convexity and of h-convexity are equivalent, 
\cite{BalRic03}, \cite{Wang05}, \cite{JLMS}, \cite{Mag}.

There are various challenging questions on h-convex functions in stratified groups, 
that are still far from being understood. 
One of the most important is certainly the validity of an Aleksandrov-Bakelman-Pucci estimate, 
that is still an intriguing open question already in the Heisenberg group and it was also one of the main
motivations to study h-convexity in this framework, see \cite{DGN03} and \cite{DGN2}.

On another side, we have the second order differentiability of convex functions,
namely, the classical Aleksandrov-Busemann-Feller's theorem.
This is an important result in different areas of Analysis and Geometry. 
For instance, in the theory of fully nonlinear elliptic equations, this theorem plays an essential role in uniqueness theory, see
Chapter 5 of \cite{CafCab95}.

Since the works of Busemann and Feller, \cite{BusFel35}, and of Aleksandrov \cite{Alek39},
there have been different methods to establish this theorem in Euclidean spaces.
The functional analytic method by Reshetnyak, \cite{Resh68}, relies on the fact that
the gradient of a convex function has bounded variation.
This scheme can be extended to stratified groups, provided that one can prove
that an h-convex function is H-$BV^2$ in the sense of \cite{AmbMag}.
This important fact has been established by different authors for h-convex functions on
Heisenberg groups and two step stratified groups \cite{GutMon1}, 
\cite{GutMon2}, \cite{GarTou05}, 
\cite{DGNT} and also for $k$-convex functions with respect to 
two step H\"ormander vector fields, \cite{Trud05}.

Precisely, the main result of \cite{DGNT} gives us the following version of the
Aleksandrov-Busemann-Feller theorem. {\em Let $\Omega$ be an open set of a two step stratified
group and let $u:\Omega\lra\R$ be h-convex. Then $u$ has at a.e. $x\in \Omega$ 
a second order h-expansion at $x$}.
We mean that $u: \Omega \lra \R$  has a {\em second order h-expansion} at 
$x \in \Omega$ if there exists a polynomial $P_x:\G\lra\R$, whose homogeneous degree 
is less than or equal to two and such that 
\begin{equation}\label{alex}
u(xw) = P_x(w) + o(\|w\|^2).
\end{equation}
Unfortunately, it is still not clear whether h-convex functions are H-$BV^2$ in higher step groups
and this makes the Aleksandrov-Busemann-Feller's theorem an important open issue for the higher
step geometries of stratified groups.
On the other hand, the first proofs of this result in Euclidean spaces, \cite{Alek39}, \cite{BusFel35} 
and also some of the subsequent proofs did not use the bounded variation property of the gradient. 
For instance, the Rockafellar's proof of \cite{Rock85} relies on Mignot's a.e. differentiability of monotone functions, \cite{Mig76}, where the crucial observation is that the subdifferential of a convex function is a monotone function.

This may suggest different approaches to Aleksandrov's theorem in stratified groups
and constitutes our first motivation to study the properties of the h-subdifferential.
The notion of h-subdifferential has been introduced in \cite{DGN2} for h-convex functions.
In analogy with the local notion of convexity mentioned above, we use ``a local version'' of 
this notion, that allows us to treat h-convex functions on arbitrary open sets.

We say that $p\in H_1$ is an {\em h-subdifferential} of $u:\Omega\lra\R$ at $x\in\Omega$ if
whenever $h\in H_1$ and $[0,h]\subset x^{-1}\cdot\Omega$, 
where $[0,h]=\{th: 0\leq t\leq 1\}$, we have
\begin{equation}\label{hsub}
u(xh) \geq  u(x) + \left\langle p, h \right\rangle.
\end{equation}
We denote by $\partial_H u(x)$ the set of all h-subdifferentials of $u$ at $x$
and the corresponding set-valued mapping by $\der_Hu:\Omega\rightrightarrows H_1$.
According to notation and terminology of Section~\ref{SectionBN}, we represent a stratified group
$\G$ as a finite dimensional Hilbert space that is a direct sum of orthogonal subspaces 
$H_1$, $H_2$, $\ldots$, $H_\iota$ and that it is equipped with a suitable polynomial operation. Here $H_1$ is the subspace of {\em horizontal directions} at the origin and 
$\lan\cdot,\cdot\ran$ in \eqref{hsub} is the scalar product of $\G$.

A nice description of the various proofs present in the literature for
the Euclidean Aleksandrov-Busemann-Feller's theorem, along with a new proof, can be found 
in the paper by Bianchi, Colesanti and Pucci, \cite{BCP96}.
Here an interesting historical comment remarks that although the almost everywhere second order Taylor expansion
is proved in Aleksandrov's paper \cite{Alek39}, this fact is not stated as a theorem, whereas the 
almost everywhere differentiability of the gradient is more emphasized.

As our second motivation, we wish to clarify this point in general stratified groups. 
In the Euclidean framework, this has been done by Rockafellar,
where in Theorem~2.8 of \cite{Rock00} proves that {\em a convex function has a second order expansion at a 
fixed point if its gradient is differentiable at that point in the extended sense.}

We translate this notion in stratified groups saying that 
a locally Lipschitz function $u : \Omega \rightarrow \R$ is 
{\em twice h-differentiable at $x$} if it is h-differentiable at $x$ and there exists 
an h-linear mapping $A_x : \G\rightarrow H_1$ such that 
\begin{equation}\label{difgrad}
\left\|\frac{\nabla_H u(xw)-\nabla_H u(x)-A_x(w)}{\|w\|}\right\|_{L^\infty(B_\delta,H_1)}\lra0
\quad\mbox{as}\quad \delta\to0^+\,.
\end{equation}
We also say that $\nabla_Hu$ is {\em h-differentiable at $x$ in the extended sense}.
This notion makes sense, since Lipschitz functions are almost everywhere
h-differentiable, by Pansu's result \cite{Pan89}.
We are now in the position to state the main result of this paper.
\begin{teo}[Second order characterization]\label{equivalence}
Let $u:\Omega\lra\R$ be h-convex and let $x\in\Omega$. Then $u$ has a second order
h-expansion at $x$ if and only if it is twice h-differentiable at $x$.
In addition, in this case the following facts hold
\begin{enumerate}
\item
the gradient $\nabla_{V_2}u(x)=\big(X_{m_1+1}u(x),\ldots,X_{m_2}u(x)\big)$ 
of $u$ at $x$ along $V_2$ exists, where 
$(X_{m_1+1},\ldots,X_{m_2})$ is an orthonormal basis of the second layer $V_2$,
\item
denoting by $P_x$ the second order h-expansion of $u$ at $x$, we have
\[
P_x(w)=u(x)+\big\lan\big(\nabla_Hu(x)+\nabla_{V_2}u(x)\big),w\big\ran+
\frac{1}{2}\,\lan\nabla_H^2P_x\,w,w\ran
\]
\item
denoting by $A_x$ the h-differential of $\nabla_Hu$ in the extended sense at $x$,
then its connection with $P_x$ is given by the formula
\[
\big(\nabla_H^2P_x\big)_{ij}=(A_x)^i_j-\sum_{l=m_1+1}^{m_2}  a^{li}_j\;X_lu(x)\,,
\]
where $a^{li}_j$ only depend on the coordinates of the group and appear in 
\eqref{X_j2}, the horizontal Hessian $\nabla_H^2P_x$ is nonnegative and $X_iX_jP_x=(A_x)^i_j$.
\end{enumerate}
\end{teo}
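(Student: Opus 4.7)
My plan is to establish both directions of the equivalence and read off items (1)--(3) along the way. For $(\Rightarrow)$, suppose $u(xw)=P_x(w)+o(\|w\|^2)$. Item (1) and the form of $P_x$ in (2) follow by substituting test directions into \eqref{alex}: with $w=tv$ for $v\in H_1$, the horizontal linear and horizontal quadratic parts of $P_x$ are identified with $\nabla_Hu(x)$ and the horizontal Hessian respectively; with $w=\tau e_l$ for $e_l$ in the second layer, for which $\|w\|^2\simeq|\tau|$, one extracts $X_lu(x)$ as the coefficient of the $V_2$-linear part of $P_x$. To promote \eqref{alex} to the extended-sense h-differentiability \eqref{difgrad}, fix $p\in\partial_Hu(xw)$ for $xw$ near $x$ and apply \eqref{hsub} to a horizontal perturbation $h$; inserting \eqref{alex} at $xwh$ and $xw$ and expanding $P_x(wh)$ via the Baker--Campbell--Hausdorff formula yields
\[
\langle\nabla_HP_x(w)-p,h\rangle+\tfrac12\langle\nabla_H^2P_x\,h,h\rangle+o(\|w\|^2)+o(\|wh\|^2)\geq 0,
\]
where $\nabla_HP_x(w)=\nabla_Hu(x)+A_x(w)$ defines $A_x$. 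Testing with $h=t\bigl(p-\nabla_HP_x(w)\bigr)$ for $t$ small forces $\|p-\nabla_HP_x(w)\|=o(\|w\|)$ uniformly for $w\in B_\delta$; combined with Pansu's almost everywhere h-differentiability of the Lipschitz map $\nabla_Hu$ and the inclusion $\nabla_Hu(xw)\in\partial_Hu(xw)$ at differentiability points, this is exactly \eqref{difgrad}.

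For $(\Leftarrow)$, assume \eqref{difgrad} and take a horizontal path $\gamma\colon[0,T]\to\Omega$ from $x$ to $xw$, $T\simeq\|w\|$, with horizontal velocity $h\in H_1$ satisfying $\dot\gamma(s)=\sum_i h_i(s)X_i(\gamma(s))$. The fundamental theorem of calculus gives
\[
u(xw)-u(x)=\int_0^T\langle\nabla_Hu(\gamma(s)),h(s)\rangle\,ds,
\]
and \eqref{difgrad} permits the substitution $\nabla_Hu(\gamma(s))=\nabla_Hu(x)+A_x(\gamma(s))+o(\|\gamma(s)\|)$. Since $A_x$ is h-linear into $H_1$, homogeneity under $\delta_\lambda$ forces it to vanish on $V_k$ for $k\geq 2$, so $A_x(\gamma(s))=A_x(\gamma_1(s))$, where the horizontal coordinate satisfies $\dot\gamma_1=h$. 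The $\nabla_Hu(x)$ term integrates to $\langle\nabla_Hu(x),w_1\rangle$; the symmetric part of $A_x$ integrates by parts to $\tfrac12\langle(\nabla_H^2P_x)w_1,w_1\rangle$; the antisymmetric part produces integrals $\int(\gamma_1^i\dot\gamma_1^j-\gamma_1^j\dot\gamma_1^i)\,ds$ which, by the identification of $V_2$ coordinates of the endpoint of a horizontal curve with the signed areas enclosed by its horizontal projection, equal $\langle\nabla_{V_2}u(x),w_2\rangle$ with $X_lu(x)$ read off the antisymmetric part of $A_x$ through the structure constants of \eqref{X_j2}. The remainder $o(\|\gamma(s)\|)$ integrates to $o(\|w\|^2)$ since $\|\gamma(s)\|\lesssim T$; higher-layer coordinates of $w$ enter only through the endpoint constraint on $\gamma$ and not through the integrand, so they are absorbed into the same error.

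Item (3) is computational: from \eqref{X_j2}, $X_iX_jP_x=\partial_i\partial_jP_x+\sum_l a^{li}_j X_lu(x)=(\nabla_H^2P_x)_{ij}+\sum_l a^{li}_j X_lu(x)$, while $X_iX_jP_x=(A_x)^i_j$ by definition of $A_x$ evaluated on the basis horizontal direction $X_i$, and equating the two yields the claimed formula. Nonnegativity of $\nabla_H^2P_x$ is inherited from h-convexity by restriction of $P_x$ to horizontal lines through the origin. The main obstacle is the $(\Rightarrow)$ step: upgrading a pointwise scalar expansion of $u$ to a uniform $L^\infty$ bound on $\nabla_Hu$ over a whole ball really requires h-convexity, entering through \eqref{hsub}, and the bookkeeping of the BCH expansion of $P_x(wh)$ is not automatic. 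A secondary delicate point in $(\Leftarrow)$ is the identification of the antisymmetric area integrals with $w_2$, which demands that the structure constants of the Lie algebra match those of \eqref{X_j2} with consistent normalisation.
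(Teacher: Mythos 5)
Your forward implication (from the expansion \eqref{alex} to twice h-differentiability) is essentially sound and takes a genuinely different route from the paper: where the paper shows that the second difference quotients $\Delta^2_{x,\tau}u$ converge locally uniformly to the h-convex function $P_x^{(2)}$ and then invokes Proposition~\ref{unifConv} and Corollary~\ref{eqMi}, you test the subgradient inequality \eqref{hsub} directly against the expansion. Two details need repair: the translation identity $P_x(wh)=P_x(w)+\lan\nabla_HP_x(w),h\ran+\tfrac12\lan\nabla_H^2P_x\,h,h\ran$ for $h\in H_1$ is exactly Remark~\ref{poly} (left invariance of the $2$-homogeneous part), so no fresh BCH bookkeeping is needed; and the test vector must be taken with $\|h\|\simeq\epsilon\|w\|$, letting $w\to0$ before $\epsilon\to0$ --- a fixed ``small $t$'' only yields $O(\|w\|)$, not $o(\|w\|)$. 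With these fixes your argument actually gives the everywhere-valid inclusion \eqref{mignot}, which is stronger than \eqref{difgrad}, and is a legitimate shortcut past the paper's compactness machinery. Your derivations of (1), (2) and of nonnegativity in (3) by restriction to horizontal lines and $V_2$-directions are likewise fine and close to the paper's.

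The reverse implication, however, has a genuine gap. The identity $u(xw)-u(x)=\int_0^T\lan\nabla_Hu(\gamma(s)),h(s)\ran\,ds$ is not available: by Pansu, $\nabla_Hu$ exists only Haar-almost everywhere, a single horizontal curve is Haar-negligible, and \eqref{difgrad} is an $L^\infty$ statement modulo null sets, so neither the integrand nor the substitution $\nabla_Hu(\gamma(s))=\nabla_Hu(x)+A_x(\gamma(s))+o(\|\gamma(s)\|)$ is justified along your fixed $\gamma$. This is precisely the difficulty the paper's architecture exists to overcome: Theorem~\ref{carsubdif}, i.e. $\bar{co}(\nabla_H^\star u)=\partial_Hu$, upgrades \eqref{difgrad} to the subdifferential inclusion \eqref{mignot} at \emph{every} point (the hard direction of Lemma~\ref{MIeqGR}), and then the $\lambda$-subdifferential mean value theorem (Theorem~\ref{meanval}), applied to $v(w)=u(xw)-u(x)-P_x(w)$ along the horizontal segments produced by Proposition~\ref{genprop}, replaces the fundamental theorem of calculus entirely; each mean-value subgradient $p_s$ is then $o(\rho)$ by \eqref{mignot} and Proposition~\ref{hconvexP2}, giving $|v(\delta_\rho w)|=o(\rho^2)$. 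A second, independent problem is path dependence: the endpoint $w$ of a horizontal curve determines only those combinations of the signed areas $\int(\gamma_1^i\dot\gamma_1^j-\gamma_1^j\dot\gamma_1^i)\,ds$ weighted by the structure constants, so $\int\lan A_x\gamma_1,\dot\gamma_1\ran\,ds$ depends only on $w$ precisely when the antisymmetric part of $A_x$ lies in the span of the matrices $(a^{li}_j)_{ij}$, $l=m_1+1,\ldots,m_2$; this span is in general a proper subspace of the antisymmetric matrices (already in $\H^n$ for $n\geq2$, where it is one-dimensional), and this solvability is exactly what allows one to ``read off'' $X_lu(x)$ --- you assert it rather than prove it. In groups of step at least $3$ the higher-layer endpoint data raise the same issue and are not absorbed into the error automatically. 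Both problems disappear if, as in the paper, you construct $P_x$ from $\nabla_HP_x(w)=\nabla_Hu(x)+A_x(w)$ and telescope $v(\delta_\rho w)$ along the factorization of Proposition~\ref{genprop} instead of integrating along a curve.
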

As a consequence of this theorem, we can establish that 
{\em the horizontal gradient of h-convex 
functions in two step stratified groups are almost everywhere h-differentiable in the extended sense
and satisfy the properties (1), (2) and (3) of Theorem~\ref{equivalence}}. 

We also wish to point out how the formula of (3) in commutative groups fits into Rockafellar's
result on symmetry and nonnegativity of $A_x$, \cite{Rock00}. This symmetry breaks in
stratified groups, although the symmetric part $\nabla_H^2u$ of the h-differential 
in the extended sense remains nonnegative for any h-convex function $u$.
This is due to the fact that the extended differential $A_x$ also takes into account
the first order derivatives along second order directions, as it happens for $P_x$.

The proof of Theorem~\ref{equivalence} needs several basic results involving the h-subdifferential. 
Since we expect that these results should play a role in the potential
development of a nonsmooth calculus for h-convex functions, we wish to emphasize some of them. 
We follow Rockafellar's approach to show that the existence of a second order h-expansion
implies the h-differentiability of the horizontal gradient in the extended sense.
To this aim, we have first to establish the following
\begin{lem}\label{MIeqGR}
Let $u:\Omega\lra\R$ be h-convex. 
Then $u$ is twice h-differentiable at $x$ if and only if there exist an h-linear mapping  
$A_x : \G\rightarrow H_1$ and $v \in H_1$ such that 
\begin{equation}\label{mignot}
\partial_H u(xw) \subseteq v + A_x(w) + o(\|w\|)\mathbb B
\end{equation}
for all $w\in x^{-1}\Omega$. In particular, if \eqref{mignot} holds, then $v = \nabla_H u(x)$.
\end{lem}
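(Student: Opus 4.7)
The lemma is a biconditional, and I would handle the two directions separately, following Rockafellar's Euclidean template adapted to the sub-Riemannian setting.

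For the forward implication, ``twice h-differentiable at $x$ $\Rightarrow$ \eqref{mignot}'', the essential tool is a representation of $\partial_H u(y)$ at every point as the closed convex hull of limits of horizontal gradients at nearby h-differentiability points:
$$\partial_H u(y)=\overline{\operatorname{co}}\Big\{\lim_n\nabla_H u(y_n)\,:\, y_n\to y,\ u\text{ is h-differentiable at }y_n\Big\}.$$
This is an h-convex analogue of the classical Clarke-gradient description, and one expects it to be available from the preliminary material on $\partial_H u$ (combining h-convexity along horizontal lines with Pansu's a.e. h-differentiability of Lipschitz functions). Once it is in hand, fix $\varepsilon>0$ and choose $\delta>0$ so that $\|\nabla_H u(xs)-\nabla_H u(x)-A_x(s)\|\le\varepsilon\|s\|$ holds for a.e. $s\in B_{2\delta}$. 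Given $w\in B_\delta$ and $p\in\partial_H u(xw)$, write $p=\sum_i\lambda_i p_i$ with $p_i=\lim_n\nabla_H u(xw_n^i)$ and $w_n^i\to w$. Passing to the limit in the above estimate, using continuity and h-linearity of $A_x$ together with $\|w_n^i\|\to\|w\|$, gives $\|p_i-\nabla_H u(x)-A_x(w)\|\le\varepsilon\|w\|$; a convex combination then yields the same bound for $p$. Since $\varepsilon$ is arbitrary this is the $o(\|w\|)$ inclusion, and in particular $v=\nabla_H u(x)$.

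For the converse, ``\eqref{mignot} $\Rightarrow$ twice h-differentiable at $x$'': setting $w=0$ in \eqref{mignot} forces $\partial_H u(x)\subseteq\{v\}$. Since h-convex functions have nonempty h-subdifferential at every interior point (a preliminary result of the paper), we get $\partial_H u(x)=\{v\}$, and the standard implication ``singleton h-subdifferential $\Rightarrow$ h-differentiable with that gradient'' for h-convex functions yields both h-differentiability of $u$ at $x$ and the identity $v=\nabla_H u(x)$ claimed in the last sentence of the lemma. For the $L^\infty$-estimate \eqref{difgrad}, Pansu's theorem gives h-differentiability of $u$ at a.e.\ point $xw$, at which $\nabla_H u(xw)\in\partial_H u(xw)$; combining this with \eqref{mignot} gives
$$\|\nabla_H u(xw)-\nabla_H u(x)-A_x(w)\|=o(\|w\|)$$
for a.e.\ $w\in B_\delta$. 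Taking the essential supremum over $B_\delta$ furnishes the required convergence as $\delta\to 0^+$.

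The main obstacle is the forward direction, and within it the density/closed-convex-hull description of $\partial_H u$. In the Euclidean setting this is classical via Clarke calculus; in stratified groups one must verify it by arguing along horizontal lines, where h-convexity reduces matters to the elementary one-variable situation, and then combining with Pansu's theorem. The noncommutativity of the group enters only through the h-linearity of $A_x$ and the use of left translation, and should not cause genuine difficulties once the representation of $\partial_H u$ is available.
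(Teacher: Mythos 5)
Your proposal is correct and takes essentially the same route as the paper's proof: your converse direction is the paper's first paragraph (set $w=0$, use nonemptiness of $\partial_Hu(x)$ and Theorem~\ref{UniqueSub} to get h-differentiability with $v=\nabla_Hu(x)$, then restrict \eqref{mignot} to the a.e.\ h-differentiability points to obtain \eqref{difgrad}), and your forward direction is exactly the paper's argument, which passes the gradient estimate to limits so as to cover $\nabla_H^\star u(xw)$ and then invokes Theorem~\ref{carsubdif}, $\partial_Hu(xw)=\bar{co}\left(\nabla_H^\star u(xw)\right)$, to reach the full h-subdifferential. The only cosmetic difference is that you represent points of the hull as finite convex combinations $\sum_i\lambda_i p_i$, whereas the paper simply notes that the target set $\nabla_Hu(x)+A_x(w)+\epsilon\|w\|\mathbb B$ is closed and convex, so the inclusion of the closed convex hull is immediate.
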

At first sight, extended differentiability in the sense of \eqref{mignot} seems stronger than
\eqref{difgrad}, that implies a convergence up to a negligible set, where $\nabla_Hu$ is not
defined. In fact, the delicate point is to prove that extended differentiability implies 
\eqref{mignot}. This is a consequence of the following characterization of the
h-subdifferential.
\begin{teo}\label{carsubdif}
Let $u : \Omega\rightarrow \R$ be h-convex. Then for every $x \in \Omega$ we have
\begin{equation}\label{envstar}
\bar{co}\left( \nabla_H^\star u(x)\right) = \partial_H u (x)\,.
\end{equation} 
\end{teo}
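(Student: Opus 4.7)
The plan is to establish the two inclusions separately.

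For the inclusion $\bar{co}\bigl(\nabla_H^\star u(x)\bigr)\subseteq\partial_H u(x)$, fix any $p\in\nabla_H^\star u(x)$, so that there exist points $x_n\to x$ at which $u$ is h-differentiable and $\nabla_H u(x_n)\to p$. At each such $x_n$ the horizontal gradient itself belongs to $\partial_H u(x_n)$, since h-convexity along horizontal segments forces
\[
u(x_n h)\geq u(x_n)+\lan\nabla_H u(x_n),h\ran
\]
for every $h\in H_1$ with $[0,h]\subset x_n^{-1}\cdot\Omega$. H-convex functions being locally Lipschitz (hence continuous), given any $h\in H_1$ with $[0,h]\subset x^{-1}\Omega$ one has $[0,h]\subset x_n^{-1}\Omega$ for $n$ large and passing to the limit yields $p\in\partial_H u(x)$. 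Since $\partial_H u(x)$ is an intersection of closed half-spaces of $H_1$, it is closed and convex, so it contains the closed convex hull of $\nabla_H^\star u(x)$.

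For the reverse inclusion $\partial_H u(x)\subseteq\bar{co}\bigl(\nabla_H^\star u(x)\bigr)$ I would argue by contradiction using Hahn-Banach separation. Assume $p\in\partial_H u(x)$ but $p\notin K:=\bar{co}\bigl(\nabla_H^\star u(x)\bigr)$. Local Lipschitz continuity of $u$ bounds $\nabla_H^\star u(x)$, hence $K$ is a compact convex subset of the finite dimensional Hilbert space $H_1$. Strict separation yields $v\in H_1$ and $\alpha\in\R$ with
\[
\lan p,v\ran>\alpha>\max_{q\in K}\lan q,v\ran.
\]
By the very definition of $\nabla_H^\star u(x)$ and the compactness of $K$, there exists $r>0$ so small that $\lan\nabla_Hu(y),v\ran\leq\alpha$ at every point $y\in B_r(x)$ where $u$ is h-differentiable. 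On the other hand, the h-subdifferential inequality at $x$ in direction $v$ gives, for all sufficiently small $t>0$,
\[
\frac{u(x(tv))-u(x)}{t}\geq\lan p,v\ran>\alpha,
\]
so the one-sided derivative at $0$ of the 1-D convex function $f(t):=u(x(tv))$ satisfies $f'_+(0)>\alpha$.

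To reach a contradiction I would run a Fubini argument along horizontal lines parallel to $v$. Picking a slice $S$ through $x$ transverse to the direction $v$, the map $(y,t)\mapsto y(tv)$ parametrizes a neighborhood of $x$ in $\G$, and in exponential coordinates its Jacobian is constant, so Haar measure decomposes as the product of the induced measure on $S$ and Lebesgue measure in $t$. By Pansu's theorem $u$ is h-differentiable almost everywhere in $B_r(x)$, and Fubini then gives that for a.e. $y\in S$ close to $x$ the slice function $f_y(t):=u(y(tv))$ is convex in $t$ and $u$ is h-differentiable at $y(tv)$ for a.e. small $t$. At each such $(y,t)$ the chain rule along horizontal curves yields $f_y'(t)=\lan\nabla_Hu(y(tv)),v\ran\leq\alpha$, while convexity of $f_y$ makes it absolutely continuous, hence integration gives $(f_y(t)-f_y(0))/t\leq\alpha$ for small $t>0$. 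Letting $y\to x$ along $S$ through good slices and using continuity of $u$ produces $(f(t)-f(0))/t\leq\alpha$ for all small $t>0$, whence $f'_+(0)\leq\alpha$, contradicting the strict lower bound obtained from $p\in\partial_H u(x)$.

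The main obstacle is the Fubini step of the second inclusion: one must ensure that the horizontal foliation in direction $v$ really matches the Haar measure decomposition of $\G$, and that on almost every nearby line the one variable convex function $f_y$ inherits both absolute continuity and the identity $f_y'=\lan\nabla_Hu,v\ran$ from the a.e. h-differentiability of $u$. Once this is secured, the separation step and the one dimensional convex inequalities are routine, and the two inclusions together yield \eqref{envstar}.
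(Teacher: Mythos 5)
Your proposal is correct and follows essentially the same strategy as the paper's proof: the inclusion $\bar{co}\big(\nabla_H^\star u(x)\big)\subseteq\partial_H u(x)$ via a limiting argument at differentiability points together with closedness and convexity of $\partial_Hu(x)$ (the paper packages the limiting step as the closed-graph Corollary~\ref{closesub}, derived from Proposition~\ref{unifConv}), and the hard inclusion of Theorem~\ref{1carsubdif} via Hahn--Banach separation followed by a Fubini argument along horizontal lines in the separating direction, concluding by passing to the limit onto the line through $x$ using local Lipschitz continuity — exactly your scheme. The one point where you genuinely diverge is the implementation of the Fubini step, which you rightly flag as the main obstacle: the paper invokes the semidirect factorization $\G=N\rtimes Q$ with $Q=\{\delta_t q\}$ and the associated decomposition of Haar measure (Proposition~2.8 of \cite{Mag09}), whereas you propose a transverse slice with a constant-Jacobian claim in exponential coordinates. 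Your variant does work, provided the slice is chosen adapted to the grading: take $S$ to be the hyperplane spanned by a graded basis omitting $v$, so that $S\supseteq H_2\oplus\cdots\oplus H_\iota$; then for $\Phi(y,t)=y\cdot(tv)$ one has $\partial_t\Phi=X_v(\Phi)$ with $X_v$ the left-invariant extension of $v$, right translations have unit Jacobian by unimodularity, and since $X_v(y)-v$ lies in the higher layers, hence in the span of $S$, the Jacobian of $\Phi$ reduces to the nonzero constant $\det\big[e_1,\ldots,e_{n-1},v\big]$; the slice need not be a subgroup for the measure decomposition, which makes your route a slightly more elementary, purely measure-theoretic substitute for the group-theoretic factorization used in the paper. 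The remaining steps (the bound $\lan\nabla_Hu(y),v\ran\leq\alpha$ at nearby differentiability points from the definition of the reachable gradient, absolute continuity of the convex slice functions, the identity $f_y'(t)=\lan\nabla_Hu(y(tv)),v\ran$ at h-differentiability points, and the contradiction with $f'_+(0)>\alpha$) are all sound and correspond line by line to the paper's argument, which phrases the same contradiction as strict monotonicity of $t\mapsto u(\bar n\delta_tq)$ against $0\in\partial_Hu(x)$ after normalizing $p=0$.
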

We denote by $co(E)\subset H_1$ the convex hull in $H_1$ of the subset $E\subset H_1$
and by $\bar{co}(E)$ its closure. The {\em h-reachable gradient} is given by
\begin{equation}\label{limgrad}
\nabla_H^\star u(x) = \Big\lbrace  p\in H_1: \, x_k\rightarrow x,\, 
\nabla_H u(x_k)\; \mbox{exists for all $k$'s and}\; \nabla_Hu(x_k)\to p\; \Big\rbrace \,.
\end{equation}
The proof of equality \eqref{envstar} in the Euclidean case can be found for instance in \cite{AmbDan00}.
There are two main features in the proof of Theorem~\ref{carsubdif}, with respect to the Euclidean one.
First, it is still possible to use the Hahn-Banach's theorem, when applied inside the horizontal subspace $H_1$, that has a linear structure. 
Second, the group mollification does not commute with horizontal derivatives, hence 
the mollification argument of the Euclidean proof cannot be applied. We overcome this point
by a Fubini type argument with respect to a semidirect factorization, 
following the approach of \cite{Mag09}. 
The h-differentiability of $u$ from validity of \eqref{mignot} is a consequence of the following 
\begin{teo}[First order characterization]\label{UniqueSub}
Let $u:\Omega\lra\R$ be h-convex. Then $u$ is h-differentiable at $x$ if and only
if $\partial_Hu(x)=\{p\}$ and in this case $\nabla_H u(x)= \left\lbrace p \right\rbrace$.
\end{teo}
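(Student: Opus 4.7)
The plan is to handle the two implications separately. The forward direction is immediate from horizontal convexity, while the backward one is where Theorem~\ref{carsubdif} does the essential work.

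For the forward implication, suppose $u$ is h-differentiable at $x$ with $\nabla_H u(x)=p$. For each $h\in H_1$ with $[0,h]\subset x^{-1}\Omega$, the map $t\mapsto u(x\cdot th)$ is classically convex on $[0,1]$ and its right derivative at $0$ equals $\langle p,h\rangle$. Convexity then yields $u(xh)\geq u(x)+\langle p,h\rangle$, so $p\in\partial_H u(x)$. For uniqueness, any $q\in\partial_H u(x)$ satisfies $\langle q,h\rangle\leq[u(x\cdot th)-u(x)]/t$ for small $t>0$; sending $t\to 0^+$ and repeating with $-h$ force $\langle q,h\rangle=\langle p,h\rangle$ for every $h\in H_1$, whence $q=p$.

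For the backward implication, assume $\partial_H u(x)=\{p\}$. By Theorem~\ref{carsubdif} one has $\bar{co}\left(\nabla_H^\star u(x)\right)=\{p\}$, which forces $\nabla_H^\star u(x)=\{p\}$. Since h-convex functions are locally Lipschitz with respect to the sub-Riemannian distance, $\nabla_H u$ is defined a.e.\ and locally bounded; a standard compactness argument upgrades the singleton condition to the uniform estimate that for each $\varepsilon>0$ there is $\delta>0$ with $\|\nabla_H u(y)-p\|<\varepsilon$ whenever $y\in B(x,\delta)$ and $\nabla_H u(y)$ exists. Applying Theorem~\ref{carsubdif} once more at every $y\in B(x,\delta/2)$ yields $\partial_H u(y)\subset\overline{B}(p,\varepsilon)$ throughout that smaller ball, and the subdifferential inequality at both endpoints of any short horizontal segment $y\mapsto y\cdot he$, with $e\in H_1$ unit and $h>0$ small, produces
\[
\bigl|u(y\cdot he)-u(y)-h\langle p,e\rangle\bigr|\leq\varepsilon h.
\]
Next, for $w\in x^{-1}\Omega$ of small homogeneous norm, I would join $x$ to $xw$ by a horizontal curve $\gamma\subset B(x,\delta/2)$ built as a concatenation of basis-direction segments with horizontal displacements $h_1,\dots,h_N$ and total length at most $C\|w\|$; Chow--Rashevskii combined with the ball-box inequality furnishes such a $\gamma$. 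Telescoping the segment estimate along $\gamma$ and using the fact that, for any horizontal path from $0$ to $w$ in a stratified group, the sum of horizontal displacements equals the $H_1$-component of $w$, gives
\[
\bigl|u(xw)-u(x)-\langle p,w\rangle\bigr|\leq C\varepsilon\|w\|,
\]
where $\langle p,w\rangle$ computed with the scalar product of $\G$ automatically selects the $H_1$-part since $p\in H_1$. As $\varepsilon$ is arbitrary, this is the required $o(\|w\|)$ estimate, so $u$ is h-differentiable at $x$ with $\nabla_H u(x)=p$.

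The main obstacle is the passage from the horizontal datum $\nabla_H^\star u(x)=\{p\}$ to genuine h-differentiability, since the latter also controls $u$ along the non-horizontal layers $V_2,\dots,V_\iota$ where no first-order information is available a priori. The resolution exploits the bracket-generating nature of the horizontal distribution: any $w\in\G$, even one with vanishing $H_1$-component, is reached by a horizontal path of length comparable to $\|w\|$ whose net horizontal displacement is already the $H_1$-part of $w$, so the higher-layer information is encoded entirely in the geometry of the path. Crucially, it is the neighbourhood estimate $\partial_H u\subset\overline{B}(p,\varepsilon)$ on $B(x,\delta/2)$, rather than a merely pointwise statement at $x$, that makes the telescoping argument quantitative.
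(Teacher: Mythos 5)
Your proof is correct, but in the substantial direction it takes a genuinely different route from the paper. The paper does not use Theorem~\ref{carsubdif} here at all: it defines $g(y)=u(xy)-u(x)-\langle p,y\rangle$, decomposes $\delta_\rho w$ as a product of horizontal factors via the generating property of Proposition~\ref{genprop}, applies the nonsmooth mean value theorem (Theorem~\ref{meanV}) on each factor to write each increment as $\langle p_s,\rho a_s w_s\rangle$ with $p_s$ in a subdifferential at an intermediate point, and then uses the upper semicontinuity of $\partial_H u$ (Proposition~\ref{unifConv} with the constant sequence $u_i=u$) to force each $p_s$ into $\{p\}+\epsilon\mathbb B$, giving $|g(\delta_\rho w)|\leq C\gamma\epsilon\rho$ uniformly in $w$. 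You instead avoid the mean value theorem entirely: you propagate the singleton condition through Theorem~\ref{carsubdif} and a compactness argument on the reachable gradient to get the uniform inclusion $\partial_H u(y)\subset \overline B(p,\varepsilon)$ near $x$, and then replace the MVT by the two-endpoint subdifferential inequality on each short horizontal segment (which needs $\partial_H u\neq\emptyset$ everywhere -- available from the remark following Theorem~\ref{carsubdif}). Both arguments ultimately rest on the same two structural facts: horizontal connectivity at the correct scale (your Chow--Rashevskii plus ball-box step is exactly what Proposition~\ref{genprop} packages), and first-layer additivity of the Baker--Campbell--Hausdorff formula, which makes $\langle p,\cdot\rangle$ telescope to $\langle p,\pi_1(w)\rangle$ along the path. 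What each approach buys: the paper's route reuses machinery (Theorem~\ref{meanV}, Proposition~\ref{unifConv}) that it needs anyway for Theorem~\ref{equivalence}, and keeps Theorem~\ref{carsubdif} out of the logical chain; your route is more elementary at the level of the estimate (no smooth approximation, no MVT), though it leans on Theorem~\ref{carsubdif}, whose Fubini-type proof via semidirect factorization is among the heaviest in the paper -- note you could lighten this, since the neighbourhood inclusion $\partial_H u(y)\subseteq\{p\}+\varepsilon\mathbb B$ follows directly from Proposition~\ref{unifConv} applied to the constant sequence, with no detour through the reachable gradient.
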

The uniqueness of the h-subdifferential as a consequence of h-differentiability has been already shown
\cite{DGN2}, see also \cite{CaPi1} for the case of Heisenberg groups.
To show the opposite implication we decompose the difference quotient of $u$ 
into sums of difference quotients along horizontal directions. 
The same decomposition along horizontal directions have been first used by Pansu, \cite{Pan89}.
The second ingredient is the following 
\begin{teo}[Nonsmooth mean value theorem]\label{meanV}
Let $u:\Omega\lra\R$ be an h-convex function. Then for every $x \in \Omega$ and every $h$ such that 
$[0,h]\subseteq H_1\cap x^{-1}\Omega$,  there exists $t \in [0,1]$ and 
$p \in \partial_H u (x\delta_th)$ such that $u(xh) - u(x) =  \left\langle p ,h \right\rangle$.
\end{teo}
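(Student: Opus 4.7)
The plan is to reduce to the smooth case via a left-group-convolution of $u$, and then pass to the limit. Since $h \in H_1$ and $[0,h] \subseteq x^{-1}\cdot\Omega$, the curve $s \mapsto x\delta_s h$ is a horizontal segment lying in $\Omega$, so $\varphi(s) := u(x\delta_s h)$ is classically convex on $[0,1]$; the goal becomes producing $t \in [0,1]$ and $p \in \partial_H u(x\delta_t h)$ with $\lan p,h\ran = u(xh) - u(x)$.

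First I regularize by left-group-convolution: for a smooth nonnegative approximate identity $\rho_\ep$ on $\G$ with support shrinking to the unit, set
\[
u_\ep(z) := \int_{\G} u(y z)\, \rho_\ep(y)\,dy.
\]
Then $u_\ep$ is smooth on open subsets exhausting $\Omega$ and converges locally uniformly to $u$; crucially, $u_\ep$ remains h-convex, since for each fixed $y$ the curve $t \mapsto (y z)\delta_t h$ is an integral curve of the left-invariant horizontal field $X_h$ issuing from $y z$, hence a horizontal line on which $u$ is classically convex in $t$, and averaging in $y$ preserves convexity. Because h-convex functions are locally Lipschitz, this also supplies a uniform local bound on $|\nabla_H u_\ep|$.

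Next I apply the classical mean value theorem to the smooth convex function $\varphi_\ep(s) := u_\ep(x\delta_s h)$ on $[0,1]$, obtaining $t_\ep \in (0,1)$ with $\varphi'_\ep(t_\ep) = u_\ep(xh) - u_\ep(x)$. Since $h \in H_1$, this derivative coincides with $\lan \nabla_H u_\ep(x\delta_{t_\ep}h),\, h\ran$; setting $p_\ep := \nabla_H u_\ep(x\delta_{t_\ep}h)$, smoothness and h-convexity of $u_\ep$ yield $p_\ep \in \partial_H u_\ep(x\delta_{t_\ep}h)$. The uniform bound on $|p_\ep|$ allows extracting a subsequence with $t_\ep \to t \in [0,1]$ and $p_\ep \to p \in H_1$; passing to the limit in $\lan p_\ep, h\ran = u_\ep(xh) - u_\ep(x)$ gives $\lan p,h\ran = u(xh) - u(x)$. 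For any $k \in H_1$ with $[0,k] \subseteq (x\delta_t h)^{-1}\cdot\Omega$, the same inclusion is inherited by $(x\delta_{t_\ep}h)^{-1}\cdot\Omega$ when $\ep$ is small, so the subgradient inequality $u_\ep(x\delta_{t_\ep}h \cdot k) \geq u_\ep(x\delta_{t_\ep}h) + \lan p_\ep, k\ran$ passes to the limit via locally uniform convergence, yielding $p \in \partial_H u(x\delta_t h)$.

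The main technical step is the preservation of h-convexity under left-group-convolution, which ultimately reflects the left-invariance of the horizontal distribution: right-translation sends horizontal lines to horizontal lines. The remainder is a compactness argument together with a routine passage to the limit, the only modest care being to verify that the admissibility condition $[0,k] \subseteq y^{-1}\cdot\Omega$ is stable under small perturbations of the base point $y = x\delta_{t_\ep}h$.
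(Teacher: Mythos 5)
Your proof is correct and follows essentially the same route as the paper's: the paper proves the more general Theorem~\ref{meanval} by taking smooth h-convex functions $u_i$ converging locally uniformly, applying the classical mean value theorem to $s\mapsto u_i(x\delta_s h)$, extracting limits $t_i\to t$ and $\nabla_H u_i(x\delta_{t_i}h)\to p$, and concluding $p\in\partial_H u(x\delta_t h)$ via the stability result of Proposition~\ref{unifConv}, while you do the same thing, merely constructing the approximating sequence explicitly by group mollification and inlining the stability argument (a detail the paper leaves implicit, so this is a useful addition). The only slip is in your closing remark: it is \emph{left} translation, not right translation, that maps horizontal lines to horizontal lines (left-invariance of the horizontal distribution), which is exactly why the mollification must average the left translates $z\mapsto u(yz)$ as in your displayed formula --- your actual argument uses this correctly, so nothing breaks.
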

This theorem is also important to complete the characterization of Theorem~\ref{equivalence}.
In fact, it is an essential tool to establish that twice h-differentiability implies 
the existence of a second order $h$-expansion. 
This implication again requires Pansu's approach to differentiability
and in addition a nonsmooth mean value theorem
for functions of the form $U+P$, where $U$ is h-convex and $P$ is a polynomial of 
homogeneous degree at most two. This slightly more general version of Theorem~\ref{meanV}
is given in Theorem~\ref{meanval}, where the h-subdifferential is replaced
by the more general $\lambda$-subdifferential, see Definition~\ref{lambdasub}.
In the Euclidean framework, a short proof of the previous result can be found in Theorem~7.10
of \cite{AlbAmb99}, where the Clarke's nonsmooth mean value theorem plays a key role.

In this connection, we wish to emphasize the intriguing open question on the 
validity of a nonsmooth mean value theorem for Lipschitz functions in stratified groups.
In the Euclidean framework, this theorem holds using the notion of Clarke's differential.
This notion of differential relies on subadditivity of ``limsup directional derivatives'',
that allows in turn to apply Hahn-Banach's theorem, see \cite{Clarke90}. 
The obvious extension of this notion to stratified groups does not work and
the analogous obstacle comes up considering h-convex functions, where horizontal
directional derivatives always exist, see Definition~\ref{par+}.
It is curious to notice that our nonsmooth mean value theorem implies this 
subadditivity, see Corollary~\ref{subadd}, whereas in the Euclidean framework 
subadditivity eventually leads to the nonsmooth mean value theorem.

\medskip

{\bf Acknowledgments.}
We are grateful to Andrea Calogero and Rita Pini for having addressed our attention to
the paper by Rockafellar \cite{Rock00}, that was our starting point.
We thank Luigi Ambrosio for a stimulating conversation and for having pointed out 
to us the notion of $\lambda$-subdifferential
in connection with the characterization of second order differentiability. 
\section{Basic notions}\label{SectionBN}
A stratified group can be thought of as a graded vector space $\G=H_1\oplus\cdots\oplus H_\iota$ with a polynomial group operation given by the Baker-Campbell-Hausdorff formula. More precisely, let $\cG$ be its Lie algebra, $n= \rm{dim}\cG = m_\iota$. Then we assume that $\cG = V_1\oplus\ldots\oplus V_\iota$, where $V_j=[V_1,V_{j-1}]$ for all $j\geq1$ and $V_j=\{0\}$ if and only if $j>\iota$. On $\G$ we can define a natural family of dilation $\delta_r : \G \rightarrow \G$ compatible with the group operation, \cite{FS82}.\\
The left invariant vector fields of $V_j$ are exactly the ones that at the origin take values
in $H_j$. Recall that the origin is exactly the unit element of the group. A scalar product on $\G$ will be understood, assuming that all subspaces $H_j$ are orthogonal. We denote by $\pi_j:\G\lra H_j$ the associated orthogonal projections. 
For every $s= 1,\ldots \iota$, we fix a basis $(e_{m_{s-1} +1},\ldots, e_{m_s})$ of $H_s$, then 
\[
\sum_{i = m_{s-1} +1}^{m_s} \!x_i e_i\, \in H_s\quad\mbox{and}\quad
x=\sum_{s=1}^\iota \sum_{i = m_{s-1} +1}^{m_s} \!x_i\, e_i. 
\]
We also fix $\left(X_{m_{s-1}+1},\ldots,X_{m_s}\right)$ as the basis of $V_s$ such that, with respect to the coordinates $(x_j)$, $X_j$ is  $e_j$.
Throughout, we fix an homogeneous distance $d$ on $\bbG$, i.e. a continuous map $d:\bbG \times \bbG \rightarrow [0,+\infty[$ that makes $(\bbG,d)$ a metric space and has the following properties
\begin{enumerate}
\item
$d(x,y)=d(ux,uy)$ for every  $x,y,u \in \G$, 
\item
$d(\delta_r x, \delta_r y)=r d(x,y)$ for every $r>0.$
\end{enumerate}
For every $w\in \bbG$, we denote by $\| w \|$ the homogeneous norm of $w$ induced by the distance $d$
by $\| w \| = d(0,w)$.

\vskip.2cm

As in \cite{FS82}, open balls with respect to $d$ will be denoted by $B_{x,r}$.
The following proposition is a well known fact, see for instance \cite{MagTh}.
\begin{prop}\label{genprop}
Let $\bbG$ be a stratified group and let $(w_1,\ldots,w_{m_1})$ be a basis of $H_1$. Then there exists a positive integer $\gamma$ and an open bounded neighbourhood of the origin $U \subset\R^\gamma$ such that the following set
$
W =\big\{\prod_{s=1}^\gamma a_s w_{i_s} \vert \; (a_s) \subset U \big\},
$
where $1\leq i_s\leq m_1$ for all $s=1,\ldots,\gamma$, is an open neighbourhood of $0 \in \bbG$.
\end{prop}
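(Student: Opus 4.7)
The plan is to exploit the Lie-generating property of $V_1$. Since $\cG = V_1 \oplus \cdots \oplus V_\iota$ with $V_j = [V_1,V_{j-1}]$, for each layer $V_j$ I would select a basis of $V_j$ consisting of iterated Lie brackets of length exactly $j$ in the given basis $(w_1,\ldots,w_{m_1})$ of $V_1$; assembling these across $j=1,\ldots,\iota$ gives a graded basis $\{B_\alpha\}_{\alpha=1}^n$ of $\cG$ with $n=\dim\cG$. For each bracket $B_\alpha \in V_{j(\alpha)}$, the Baker--Campbell--Hausdorff formula produces a group commutator word
\[
c_\alpha(t) \;=\; \prod_{s=1}^{\ell_\alpha} (\eps_{\alpha,s}\, t)\, w_{k_{\alpha,s}}
\]
(product in $\bbG$, with signs $\eps_{\alpha,s} \in \{\pm 1\}$ and $k_{\alpha,s} \in \{1,\ldots,m_1\}$) such that, in exponential coordinates,
\[
c_\alpha(t) \;=\; t^{\,j(\alpha)} B_\alpha \;+\; o\bigl(t^{\,j(\alpha)}\bigr) \qquad \text{as } t \to 0.
\]
The base case $j=2$ is the classical identity $(tw_a)(tw_b)(-tw_a)(-tw_b) = t^2[w_a,w_b]+O(t^3)$; longer brackets are handled inductively by replacing one entry of a length-$(j-1)$ commutator word with one of length $j$.

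Concatenating these commutator words produces a single sequence of horizontal indices $(i_1,\ldots,i_\gamma) \subset \{1,\ldots,m_1\}$ of total length $\gamma = \sum_\alpha \ell_\alpha$. I then look at the induced smooth map
\[
\Phi : \R^\gamma \lra \bbG, \qquad \Phi(a_1,\ldots,a_\gamma) \;=\; \prod_{s=1}^\gamma a_s w_{i_s},
\]
and restrict it to the $n$-parameter slice obtained by setting $a_s = \eps_{\alpha,s}\, t_\alpha$ on the block of indices belonging to $c_\alpha$. By BCH this restriction takes the form
\[
\Psi(t_1,\ldots,t_n) \;=\; \sum_{\alpha=1}^n t_\alpha^{\,j(\alpha)} B_\alpha \;+\; R(t)
\]
in exponential coordinates, where $R$ is of strictly higher homogeneous degree in each graded block. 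Using the intrinsic dilations $\delta_r$ on $\bbG$ together with the compatible anisotropic rescaling $t_\alpha \mapsto r^{1/j(\alpha)} t_\alpha$ on the parameter space (with a standard sign convention to handle even $j(\alpha)$), the remainder $R$ becomes negligible in the limit $r \to 0^+$, so that $\Psi$ is asymptotically, in the graded sense, a homeomorphism onto a neighbourhood of $0 \in \bbG$. A Brouwer-degree argument then forces $\Psi$, and hence $\Phi$, to cover a neighbourhood of $0$; choosing $U \subset \R^\gamma$ as any small open bounded neighbourhood of the origin containing the parameter box that witnesses the covering yields the desired set $W = \Phi(U)$.

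The main obstacle is that the Jacobian of $\Phi$ at the origin has rank only $m_1 = \dim H_1$, so the classical inverse function theorem is unavailable. The substance of the argument is replacing it by a \emph{homogeneous} (graded) open mapping property, which requires careful bookkeeping of the BCH expansion of nested commutators and uniform control of the remainder terms under the anisotropic dilations $\delta_r$.
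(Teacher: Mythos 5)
The paper never proves this proposition: it is quoted as a well-known fact from \cite{MagTh}. So your attempt can only be measured against the standard argument, which it essentially reconstructs --- approximate a graded basis $\{B_\alpha\}$ of $\cG$ by horizontal commutator words $c_\alpha(t)$ with $\log c_\alpha(t)\in\bigoplus_{k\ge j(\alpha)}V_k$ and leading term $t^{j(\alpha)}B_\alpha$, then run a topological degree argument on the concatenated map. The skeleton is right, but two steps need repair.

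First, the scaling step is stated incorrectly. Since every factor $a_sw_{i_s}$ is horizontal, one has $\Phi(ra)=\delta_r\Phi(a)$ \emph{exactly}, so $\Psi$ and its principal part $g(t)=\sum_\alpha \mathrm{sgn}(t_\alpha)\vert t_\alpha\vert^{j(\alpha)}B_\alpha$ are both exactly $\delta_r$-equivariant under the uniform rescaling $t\mapsto rt$; consequently the remainder $R=\Psi-g$ (in exponential coordinates) obeys the same exact scaling law as $g$, and no limit $r\to 0^+$ makes it ``negligible'' in the $\delta_r$-graded sense. What actually kills $R$ is the anisotropic substitution $t_\alpha=r^{1/j(\alpha)}s_\alpha$ combined with a monomial count that you omit: by the exact equivariance just mentioned, every monomial $\prod_\alpha t_\alpha^{m_\alpha}$ in the $V_k$-component of $\Psi$ has total degree $\sum_\alpha m_\alpha=k$, and since the $V_{k'}$-part of $\log c_\alpha(t)$ is proportional to $t^{k'}$ with $k'\ge j(\alpha)$, every nonzero $m_\alpha$ is $\ge j(\alpha)$. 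Hence the anisotropic weight $\sum_\alpha m_\alpha/j(\alpha)$ is $\ge 1$, with equality precisely for the diagonal terms $t_\alpha^{j(\alpha)}B_\alpha$, so every off-diagonal monomial scales like $r^{1+\eta}$, $\eta>0$, against the diagonal's $r$. Only then does the degree argument close (the piecewise definition of $c_\alpha$ for $t<0$ at even $j(\alpha)$, which you need so that $g$ is a homeomorphism of $\R^n$, destroys smoothness at $t_\alpha=0$, but degree theory only requires continuity). As written, invoking $\delta_r$ ``together with'' the anisotropic rescaling conceals the one nontrivial estimate of the proof.

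Second, you prove less than the statement asserts: the degree argument shows that $W=\Phi(U)$ \emph{contains} a neighbourhood of $0$, whereas the proposition claims $W$ \emph{is open}, and the image of your $U$ need not be open --- $\Phi$ has critical parameters whose images may fail to be interior points of $W$. The standard repair is to double the word: pick an open set $V\ni 0$ with $V\subseteq\Phi(U)$, put $U_2=U\cap\Phi^{-1}(V)$, an open bounded neighbourhood of $0\in\R^\gamma$ with $\Phi(U_2)=V$, and replace $(\gamma,U)$ by the length-$2\gamma$ word with index pattern $(i_1,\ldots,i_\gamma,i_1,\ldots,i_\gamma)$ on $U\times U_2$; its image is $\bigcup_{a\in U}\Phi(a)\cdot V$, a union of left translates of the open set $V$, hence an open neighbourhood of $0$. (For the uses the paper makes of the proposition --- covering a sphere after dilation and the bound \eqref{M} --- your weaker covering conclusion would in fact suffice, but the statement as given requires this extra step.)
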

According to notation of the previous proposition, we set the geometric constant
\begin{equation}\label{M}
M = \sup_{y \in W} \|y\|.
\end{equation}
\begin{defin}[h-convex set]\rm
We say that a subset $C\subset\G$ is h-convex if for every $x,y \in C$ such that 
$x \in  H_y$ we have $x\delta_\lambda(x^{-1}y) \in C$ for all $\lambda \in [0,1]$. 
\end{defin}
We denote by $H_x$ the left translation of $H_1$ by $x$, namely $H_x=xH_1$.
For each $h \in H_1$, we define the {\em horizontal segment} 
$ \left\lbrace t h ,\ t \in [0,1]\right\rbrace$ through the short notation $[0,h]$.
For any $x \in \G$, we set $x\cdot [0,h] = \lbrace x\delta_t h,\ 0\leq t \leq 1\rbrace$
and throughout $\Omega$ denotes an open subset of $\G$. 
\begin{defin}[h-convex function]\label{HCfunction}\rm
We say that $u: \Omega \rightarrow \R$ is {\em h-convex} if for every $x,y\in\Omega$ such that
$x \in H_y$ and $x\cdot[0,x^{-1}y]\subset\Omega$, we have
\begin{equation}
u\big(x\delta_\lambda(x^{-1}y)\big) \leq \lambda u(y) + (1-\lambda) u(x), \qquad \forall \lambda \in [0,1].
\end{equation}
\end{defin}
As an important property of h-convex functions, we have the following
\begin{teo}[M. Rickly, \cite{Rickly06}]\label{rickly}
Every measurable h-convex function is locally Lipschitz.
\end{teo}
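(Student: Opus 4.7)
The plan is to split the argument into two stages. First, show that any measurable h-convex function is locally bounded on $\Omega$, the step where measurability plays its role exactly as in the Euclidean setting. Then upgrade local boundedness to local Lipschitz continuity by connecting nearby points through a chain of at most $\gamma$ horizontal segments, invoking the one-variable theory of convex functions along each piece.

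For the boundedness step, I fix $x_0\in\Omega$ and use the neighborhood $W$ of the origin produced by Proposition~\ref{genprop}, with $x_0W\subset\Omega$. The map
\[
F(a_1,\ldots,a_\gamma)=x_0\,a_1 w_{i_1}\,a_2w_{i_2}\cdots a_\gamma w_{i_\gamma}
\]
from $U\subset\R^\gamma$ onto $x_0 W$ behaves well under dilations, so $|a_s|\leq C\,\|F^{-1}(w)\|$ on a fixed small ball around $x_0$. Since $u$ is measurable, so is $u\circ F$, and Fubini's theorem ensures that for a.e. choice of $(a_1,\ldots,a_{\gamma-1})$ the final slice $a_\gamma\mapsto u\circ F(a_1,\ldots,a_\gamma)$ is measurable. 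But that slice moves the argument along a single horizontal line, hence by h-convexity it is a one-dimensional convex function; the classical fact that a measurable convex function on an interval is locally bounded gives local boundedness of almost every slice. Iterating Fubini through the remaining variables produces an essential upper bound $u\leq K$ on a smaller neighborhood $x_0W'$. To upgrade this essential bound to a pointwise one, I use h-convexity along individual horizontal chords: any point in a sufficiently small neighborhood of $x_0$ is the midpoint of a horizontal segment whose endpoints both lie in the set of essential boundedness, and the one-dimensional convex inequality along that segment propagates the bound everywhere. The reverse chord trick $u(x)\geq 2u(x_0)-u(x')$, with $x_0$ the midpoint of the horizontal chord from $x$ to $x'$, supplies a matching lower bound.

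For the Lipschitz step, pick $R>0$ with $|u|\leq K$ on $B_{x_0,2R}\subset\Omega$. Given $y$ near $x_0$, the dilation argument combined with Proposition~\ref{genprop} lets me write $x_0^{-1}y=a_1w_{i_1}\cdots a_\gamma w_{i_\gamma}$ with $|a_s|\leq C\,d(x_0,y)$. I walk from $x_0$ to $y$ along the chain $z_0=x_0$, $z_s=z_{s-1}\,a_s w_{i_s}$, each step being a horizontal segment of length $|a_s|$. On every segment $[z_{s-1},z_s]$ the restriction of $u$ is a one-variable convex function bounded by $K$ on an interval that extends a fixed distance beyond its endpoints (all points remain in $B_{x_0,2R}$ provided $y$ is close enough to $x_0$), so the standard one-dimensional estimate gives
\[
|u(z_s)-u(z_{s-1})|\leq \frac{C K}{R}\,|a_s|.
\]
Summing over $s$ yields $|u(y)-u(x_0)|\leq (CK/R)\,d(x_0,y)$; running the same argument around any base point in $B_{x_0,R/2}$ produces the required local Lipschitz continuity.

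The main obstacle I expect is the boundedness step, and within it the passage from the a.e.\ bound given by Fubini to a bound on a genuine open neighborhood. In the Euclidean proof one propagates upper bounds along every straight segment, so the complement of the bad null set already covers a whole neighborhood; here only horizontal chords are available, and one must check that the exceptional null set cannot block every horizontal chord connecting a test point to the set of essential boundedness. Proposition~\ref{genprop} provides enough geometric room by allowing one to choose the first horizontal factor transversally to the exceptional directions, but the combinatorics of the $\gamma$-fold iteration have to be handled carefully. The subsequent passage from local boundedness to local Lipschitz reduces cleanly to the one-dimensional theory and should be routine.
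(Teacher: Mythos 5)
First, a point of comparison: the paper does not prove this statement at all --- it is quoted as a theorem of Rickly \cite{Rickly06}, whose proof is a substantial standalone piece of work. So your proposal has to stand on its own, and while its skeleton is the standard one (local boundedness first, then local Lipschitz continuity via chains of at most $\gamma$ horizontal segments from Proposition~\ref{genprop} and the one-dimensional convexity estimates --- that second half of your argument is essentially correct and is the routine part), the heart of the theorem is exactly the step you leave unresolved, and the specific bridge you propose for it fails. Note also that you mislocate where measurability enters: since Definition~\ref{HCfunction} demands the full chord inequality for all $\lambda\in[0,1]$, the restriction of $u$ to any horizontal line is a genuinely convex function of one variable, hence automatically continuous and locally bounded with no measurability hypothesis; what is \emph{not} automatic is any uniformity of these one-dimensional bounds across lines, and ``iterating Fubini through the remaining variables'' does not produce the essential bound $u\le K$ you claim, because each slice bound depends on the slice and finiteness a.e.\ is not uniform boundedness.

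The genuine gap is the promotion of an essential (a.e.) bound to a pointwise bound on an open neighbourhood. Your key claim --- that every point $p$ near $x_0$ is an interior point of a horizontal segment with both endpoints in the set of essential boundedness --- cannot be deduced from a full-measure bound in $\G$: all candidate endpoints of horizontal chords through a fixed $p$ lie in the horizontal plane $p\cdot\exp(V_1)$, which for step $\iota\ge 2$ is a Haar-null set, so a set of full measure may miss it entirely except at $p$ itself. (The same obstruction invalidates your ``reverse chord trick'' for the lower bound: $x_0$ lies on a horizontal chord through $x$ only when $x^{-1}x_0\in H_1$, which is false for generic $x$; the lower bound must instead be propagated step by step along the chain.) Repairing this requires disintegrating the measure along the flow foliations of the generating horizontal fields --- in the spirit of the semidirect Fubini argument the paper uses for a different purpose in the proof of Theorem~\ref{1carsubdif}, following \cite{Mag09} --- and then controlling simultaneously the whole tree of intermediate chain nodes produced by the $\gamma$-fold generating decomposition, since each node must itself sit on a chord with controlled endpoints. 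This is precisely the step you flag as ``to be handled carefully,'' but it is not a technicality: it is the actual mathematical content of Rickly's theorem, and without it the proposal proves only the (known, easy) implication that locally bounded h-convex functions are locally Lipschitz.
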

Throughout, all h-convex functions are assumed to be measurable, hence locally Lipschitz. 
\begin{defin}\rm 
We say that $u:\Omega\lra\R$ is {\em h-differentiable} at $x\in\Omega$,
if there exists an h-linear mapping $L:\G\lra\R$, namely, a linear map such that
$L(x) = L(\pi_1(x))$, such that 
$ u(xz)= \nobreak u(x)+L(z)+o(\|z\|)$. Notice that $L$ 
is unique and its associated vector with respect to the scalar product
is denoted by $\nabla_Hu(x)$. 
\end{defin}
%
%
%
%
%
%

\begin{defin}\rm
We say that $ P : \bbG \rightarrow \R$ is a \textit{polynomial} on $\bbG$, if with respect to some fixed
graded coordinates we have $P(x) = \sum_{\alpha \in \mathcal A} c_\alpha x^\alpha$, under the convention
$x^\alpha = \prod_{i=1}^{n} x^{\alpha_i}_i,$ and $x_j^0=1$, where $\mathcal A\subset\N^n$ is a finite set. 
The {\em homogeneous degree} of $P$ is the integer $h$-$\mbox{deg} (P) = \max \left\lbrace d(\alpha),\ \alpha \in \mathcal A \right\rbrace$, where $d(\alpha) = \sum d_i \alpha_i$, and $d_i = s$ if 
$m_{s-1}+1 \leq i \leq m_s$.
\end{defin}
By the previous definitions, any polynomial $P$ can be decomposed into the sum of its
{\em $j$-homogeneous parts}, denoted by $P^{(j)}$, hence 
\[
P=\sum_{0\leq j \leq h\mbox{-\scriptsize deg}P} P^{(j)}.
\]
A polynomial is {\em j-homogeneous} if it coincides with its $j$-homogeneous part.
\begin{defin}[$C^k_H-$maps, \cite{MagTh}] \rm We say that $f : \Omega \rightarrow \R$ is h-continuously differentiable in $\Omega$ if it is differentiable at $x \in \Omega$ and $d_Hf : \Omega \rightarrow HL(\G,\R)$ is continuous, where $HL(\G,\R)$ is the space of h-linear map. We denote by $C^1_H(\Omega)$ the space of all continuously differentiable maps. By induction on $k\geq 2$ we say that $f:\Omega \rightarrow \R$ is h-continuously {\em k-differentiable} if the $(k-1)$ h-differential $d^{k-1}_Hf : \Omega \rightarrow HL(\G,HL^{k-2}(\G,\R))$ is h-continuously differentiable. We denote by $C^k_H(\Omega)$ the space of all continuously $k$-differentiable maps.
\end{defin}

\begin{teo}[Stratified Taylor Inequality, Theorem 1.42 in \cite{FS82}]\label{taylor}
For each positive integer $k$ there is a constant $C_k$ such that for all $f \in C^k_H(\Omega)$ and all $x,y \in \Omega$,
\begin{equation}\nonumber
\left | f(xy) - P_x(y)\right | \leq C_k \|y\|^k \eta(x,b^k\|y\|),
\end{equation}
where $P_x$ is the left Taylor polynomial of $f$ at $x$ of homogeneous degree $k$, $b$ is a constant depending only on $\bbG$, and for $r>0$,
$$
\eta(x,r) = \sup_{\|z\|\leq r, d(I) = k} \left |X^If(xz)-X^If(x) \right |,
$$
where $X^I = X_{i_1}\cdots X_{i_l}$, for a certain $l$ dependent on $I$ and $(i_1,\ldots,i_l) \in \{1,\ldots,m_1\}^{l}$.
\end{teo}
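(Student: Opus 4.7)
The plan is to reduce the stratified Taylor inequality to the classical Euclidean Taylor theorem applied to $f$ composed with a product of horizontal flows, and then to identify the resulting polynomial with the left Taylor polynomial $P_x$. By Proposition~\ref{genprop}, every $y$ in the open neighbourhood $W$ of the origin can be written as $y=\prod_{s=1}^{\gamma}\exp(a_s(y)w_{i_s})$ for a unique $a(y)=(a_1(y),\ldots,a_\gamma(y))\in U$. Since the $w_{i_s}$ are horizontal and $U$ is bounded, a homogeneity argument using the dilations $\delta_r$ and the constant $M$ in \eqref{M} produces $b>0$, depending only on $\bbG$ and the chosen basis, such that $|a_s(y)|\leq b\,\|y\|$ for every $s$ and every $y\in W$.

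Fix $x\in\Omega$ and set $\phi(t_1,\ldots,t_\gamma)=f\!\lls x\prod_{s=1}^\gamma \exp(t_sw_{i_s})\rls$ on a neighbourhood of $0\in\R^\gamma$. Since $f\in C^k_H(\Omega)$, a direct computation using the fact that $\partial_{t_s}$ differentiation at $(t_1,\ldots,t_\gamma)$ amounts to applying $X_{i_s}$ to $f$ at the corresponding point shows that $\phi$ is classically $C^k$ and that $\partial^\alpha_t\phi(t)=X^If\!\lls x\prod_s\exp(t_sw_{i_s})\rls$ for a horizontal multi-index $I$ with $d(I)=|\alpha|$, provided $|\alpha|\leq k$. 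Applying the integral form of the classical Taylor theorem in $\R^\gamma$ to $\phi$ at $0$ of order $k$ and evaluating at $t=a(y)$ yields
\[
f(xy)=\sum_{|\alpha|<k}\frac{\partial^\alpha\phi(0)}{\alpha!}\,a(y)^\alpha \;+\; R_k(a(y)),
\]
with $|R_k(a(y))|\leq C\,|a(y)|^k\,\sup_{|t|\leq|a(y)|}|\partial^k\phi(t)-\partial^k\phi(0)|$. The bound $|a_s(y)|\leq b\|y\|$, together with the identification of $\partial^k\phi(t)$ with iterated horizontal derivatives $X^If$ evaluated at points of the form $x\prod_s\exp(t_sw_{i_s})$ (which all lie in the ball of radius at most $b^k\|y\|$ centred at $x$), shows that the supremum is dominated by $\eta(x,b^k\|y\|)$, hence $|R_k(a(y))|\leq C_k\|y\|^k\,\eta(x,b^k\|y\|)$.

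The remaining task is to identify $Q(y):=\sum_{|\alpha|<k}\frac{\partial^\alpha\phi(0)}{\alpha!}\,a(y)^\alpha$ with $P_x(y)$, up to an error already absorbed in the remainder. The map $a\mapsto\prod_s\exp(a_sw_{i_s})$ is a diffeomorphism whose inverse $y\mapsto a(y)$ is polynomial in the graded coordinates of $y$, and each component decomposes as $a_s(y)=\ell_s(y)+r_s(y)$ with $\ell_s$ a horizontal linear form and $r_s$ collecting the contributions of homogeneous weight $\geq 2$ produced by the Baker--Campbell--Hausdorff formula. Expanding $a(y)^\alpha$ and sorting monomials by homogeneous weight, one sees that $Q(y)$ is a polynomial in graded coordinates whose $(\leq k)$-homogeneous part $Q_{\leq k}$ satisfies $X^IQ_{\leq k}(0)=\partial^\alpha_t\phi(0)=X^If(x)$ for every horizontal $I$ of homogeneous weight $d(I)\leq k$; by uniqueness of the left Taylor polynomial of homogeneous degree $k$, this forces $Q_{\leq k}=P_x$. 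The higher-weight monomials in $Q$ are of size $O(\|y\|^{k+1})$ and are absorbed in $C_k\|y\|^k\eta(x,b^k\|y\|)$ after possibly enlarging $C_k$. The main technical obstacle is exactly this bookkeeping: the BCH formula mixes graded coordinates of different weights inside the $a_s(y)$, so the argument hinges on using the weight grading to single out the homogeneous-degree-$\leq k$ contribution of $Q$ and identify it with $P_x$.
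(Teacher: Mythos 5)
The paper does not prove this statement --- it is quoted verbatim from Folland--Stein \cite{FS82} (Theorem 1.42) --- so your attempt must be measured against the standard proof there, and it has a fatal gap at its very first step. The function $\phi(t_1,\ldots,t_\gamma)=f\big(x\prod_{s=1}^\gamma\exp(t_sw_{i_s})\big)$ is \emph{not} classically $C^k$ for $f\in C^k_H(\Omega)$, and the identity $\partial^\alpha_t\phi(t)=X^If\big(x\prod_s\exp(t_sw_{i_s})\big)$ with horizontal $I$, $d(I)=|\alpha|$, is false for mixed partials: differentiating in a non-final variable $t_s$ drags the increment through the later factors, so that
\[
\partial_{t_s}\phi(t)=(Z_sf)\Big(x\prod_{s'}\exp(t_{s'}w_{i_{s'}})\Big),\qquad Z_s=\Ad\Big(\prod_{s'>s}\exp(t_{s'}w_{i_{s'}})\Big)^{-1}w_{i_s},
\]
and $Z_s=w_{i_s}+(\text{bracket terms of weight}\geq 2\ \text{with coefficients polynomial in } t)$. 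Already for $k=1$ on the Heisenberg group this requires a derivative of $f$ along the center, which a $C^1_H$ function need not possess, so the integral-form Euclidean Taylor theorem cannot be applied to $\phi$; and even for smoother $f$, the $k$-th partials of $\phi$ involve derivatives $X^Jf$ with $d(J)>k$ or non-horizontal $J$, so your remainder is not dominated by $\eta(x,b^k\|y\|)$, which samples only horizontal derivatives of weight exactly $k$. (Separately, your sum should run over $|\alpha|\leq k$, not $|\alpha|<k$, to produce a remainder controlled by a modulus of continuity of the top derivatives.)

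The identification step is also unsupported: Proposition~\ref{genprop} asserts existence of a representation $y=\prod_s a_sw_{i_s}$, not uniqueness, and since in general $\gamma>\dim\bbG$, the map $a\mapsto\prod_s\exp(a_sw_{i_s})$ cannot be a diffeomorphism; $y\mapsto a(y)$ is merely a selection (your homogeneity bound $|a_s(y)|\leq b\|y\|$ is fine for that), not a polynomial map, so $Q(y)$ is not a well-defined polynomial in $y$ and the claim $X^IQ_{\leq k}(0)=X^If(x)$ has no chain-rule justification. The actual Folland--Stein proof avoids both obstacles: it proceeds by induction on $k$ via the stratified mean value theorem (their Theorem 1.41), telescoping $f(xg_1\cdots g_s)-f(xg_1\cdots g_{s-1})$ with $g_s=\exp(a_sw_{i_s})$ so that one only ever differentiates the \emph{rightmost} factor, where $\frac{d}{dt}f(h\exp(tw))=(X_wf)(h\exp(tw))$ is legitimate for $C^1_H$ functions, combined with the fact that each horizontal left-invariant derivative $X_j$ carries the degree-$k$ left Taylor polynomial of $f$ to the degree-$(k-1)$ left Taylor polynomial of $X_jf$; one factor of $b$ accrues per induction step, producing $b^k$. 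If you want a self-contained proof, you should restructure your argument along these lines rather than through Euclidean Taylor expansion in $\R^\gamma$.
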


As in \cite{FS82}, given $a \in \N$, we shall denote by $\mathcal P_a$ the space of polynomials of homogeneous degree $\leq a$. Moreover, by Proposition 1.25 in \cite{FS82}, $\mathcal P_a$ is invariant under left translations. 
\begin{prop}[1.30 in \cite{FS82}]\label{isom}
Suppose $a\in \N$, and let $\mu = \mbox{dim} \mathcal P_a$. Then the map
$$
P \rightarrow (X^IP(0))_{d(I)\leq a},
$$
is a linear isomorphism from $\mathcal P_a$ to $\C^\mu$.
\end{prop}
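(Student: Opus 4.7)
Since both $\mathcal{P}_a$ and $\C^{\mu}$ have dimension $\mu$, to prove that the linear map $\Phi\colon P\mapsto\big(X^{I}P(0)\big)_{d(I)\le a}$ is bijective it suffices to establish injectivity. My plan is to argue this by induction on the integer $a\ge 0$. The base case $a=0$ is immediate: $\mathcal{P}_{0}$ consists of constants and the only multi-index with $d(I)\le 0$ is $I=\emptyset$, for which $X^{\emptyset}P(0)=P(0)$, so $\Phi$ is just the evaluation at $0$ and is clearly an isomorphism.

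\textbf{Inductive step.} Assume the claim for $a-1$, and let $P\in\mathcal{P}_a$ satisfy $X^{I}P(0)=0$ for every multi-index with $d(I)\le a$. Taking $I=\emptyset$ yields $P(0)=0$. Fix a horizontal index $j\in\{1,\ldots,m_1\}$, so that $d_j=1$. Since in exponential coordinates $X_j$ is a polynomial-coefficient vector field that is homogeneous of degree $-1$ under the dilations $\delta_r$, the operator $X_j$ sends $\mathcal{P}_a$ into $\mathcal{P}_{a-1}$. For every multi-index $I'$ with $d(I')\le a-1$, the concatenation $I'\cdot j$ satisfies $d(I'\cdot j)=d(I')+1\le a$, so
\[
X^{I'}\big(X_jP\big)(0)=X^{I'\cdot j}P(0)=0.
\]
Applying the inductive hypothesis to the polynomial $X_jP\in\mathcal{P}_{a-1}$ gives $X_jP\equiv 0$ on $\G$ for every $j=1,\ldots,m_1$.

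\textbf{Bracket-generation closes the argument.} Because each horizontal $X_j$ annihilates $P$, we have $[X_i,X_j]P=X_i(X_jP)-X_j(X_iP)=0$, and by iteration $YP\equiv 0$ for every iterated commutator $Y$ of horizontal vector fields. Since $V_1$ generates the stratified Lie algebra $\cG$, each $X_k$ with $k\in\{m_1+1,\ldots,n\}$ is a linear combination of such iterated commutators, hence $X_kP\equiv 0$ for all $k=1,\ldots,n$. The vectors $\{X_k(x)\}_{k=1}^n$ span $T_x\G$ at every $x\in\G$, so $\nabla P\equiv 0$; this forces $P$ to be constant, and combined with $P(0)=0$ we conclude $P\equiv 0$. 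This completes the induction and hence proves the proposition.

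\textbf{Expected obstacle.} The one genuinely technical ingredient is the fact that $X_j$ maps $\mathcal{P}_a$ into $\mathcal{P}_{a-d_j}$, i.e.\ that each left-invariant $X_j$ is a polynomial-coefficient operator of homogeneous degree $-d_j$. This is standard given the stratified structure of $\G$ and the explicit form of left-invariant fields via the Baker--Campbell--Hausdorff formula, and it is the only place where the precise definition of the grading enters. Once this is granted, the induction together with bracket-generation makes the proof essentially mechanical.
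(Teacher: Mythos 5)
Your proof is correct, but note that the paper itself contains no proof of this proposition: it is quoted verbatim from Folland--Stein \cite{FS82}, Proposition 1.30, so the only comparison available is with their argument. Folland and Stein proceed by a change-of-basis computation: using the stratified structure they express $X^I=\sum_J P_{I,J}\,\partial^J$ and conversely $\partial^I=\sum_J Q_{I,J}\,X^J$, where the coefficient polynomials $P_{I,J}$ are homogeneous of degree $d(J)-d(I)\geq 0$, hence vanish at $0$ unless $d(J)=d(I)$; evaluating at $0$ therefore gives an invertible block-structured matrix relating $\big(X^IP(0)\big)_{d(I)\leq a}$ to the Euclidean Taylor coefficients $\big(\partial^IP(0)\big)_{d(I)\leq a}$, for which the statement is classical. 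Your route --- induction on $a$, pushing the vanishing of the jet at $0$ down to $X_jP\in\mathcal P_{a-1}$ for horizontal $j$, then invoking bracket generation of the first layer to force $P$ constant --- is genuinely different and arguably more conceptual: it isolates exactly the two structural inputs that matter (homogeneity of degree $-d_j$ of each $X_j$, and $V_j=[V_1,V_{j-1}]$), whereas the Folland--Stein computation buys explicit transition matrices between $X^I$ and $\partial^I$ that they reuse elsewhere, e.g.\ in proving the stratified Taylor inequality.

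One step you should make explicit. In \cite{FS82} the index $I$ runs over multi-indices, i.e.\ $X^I=X_1^{i_1}\cdots X_n^{i_n}$ in a fixed order; this convention is forced, since only then does the number of indices with $d(I)\leq a$ equal $\mu=\dim\mathcal P_a$, which your dimension count requires. Under that convention $X^{I'}X_j$ is not itself an ordered monomial, so your identity $X^{I'}(X_jP)(0)=X^{I'\cdot j}P(0)$ does not literally parse. The repair is routine but should be stated: by Poincar\'e--Birkhoff--Witt, any word in the generators rewrites as a linear combination of ordered monomials, and since the Lie algebra is graded, $[V_r,V_s]\subset V_{r+s}$, every commutator produced in the rewriting preserves the total homogeneous degree, so all resulting ordered monomials $X^{I''}$ satisfy $d(I'')=d(I')+1\leq a$ and the hypothesis kills each term at $0$. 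With this remark inserted, your induction closes and the argument is complete.
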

In particular, we are interested to find the explicit isomorphism of the previous proposition in the
case of real polynomials of homogeneous degree less than or equal to two. Let $P$ be of a
2-homogenous polynomial with expression
\[
P(x)=\frac{1}{2}\sum_{1\leq i,j\leq m_1}c_{ij}\,x_ix_j+\sum_{s=m_1+1}^{m_2}c_s\,x_s
\]
and let us consider, with respect to the same system of graded coordinates, 
the left invariant vector fields
\[
X_j=\der_{x_j}+\sum_{l=m_{d_j}+1}^na^l_j(x)\der_{x_l}
\]
for $j=1,\ldots,n$, where $a^l_j(x)$ are $(d_l-d_j)$-homogeneous polynomial.
Then a direct computation gives us the following formula
\begin{equation}\label{exppol}
P(x)=\lan\nabla_{V_2}P,x\ran+\frac{1}{2}\lan\nabla_H^2P x,x\ran\,,
\end{equation}
where $\nabla_{V_2}P=(X_{m_1+1}P,\ldots,X_{m_2}P)$ is constant since it is 0-homogeneous, we set $\lan\nabla_{V_2}P,x\ran=\sum_{j=m_1+1}^{m_2}X_jP\,x_j$ and furthermore
\begin{equation}
(\nabla_H^2P)_{ij}=\frac{X_iX_jP+X_jX_iP}{2}
\end{equation}
denotes the coefficient of the so-called {\em symmetrized horizontal Hessian},
that is also 0-homogeneous, hence constant.
In fact, the explicit expression of $X_j$ immediately yields $X_jP=c_j$ for all
$j=m_1+1,\ldots,m_2$. To check equality
\begin{equation}\label{c_ij}
\frac{c_{ij}+c_{j\,i}}{2}=\frac{X_iX_jP+X_jX_iP}{2}
\end{equation}
for $1\leq i,j\leq m_1$, we observe that 
\begin{equation}\label{X_j2}
X_j(x)=\der_{x_j}+\sum_{l=m_1+1}^{m_2}\sum_{i=1}^{m_1}a^{li}_j\,x_i\,\der_{x_l}+
\sum_{l=m_2+1}^n a^l_j(x)\,\der_{x_l}  
\end{equation}
since $a^l_j(x)=\sum_{i=1}^{m_1}a^{li}_j\,x_i$ is 1-homogeneous for
$d_l=2$ and $d_j=1$.
Taking into account the previous expression, we arrive at the following
\[
X_jP(x)=\frac{1}{2}\sum_{i=1}^{m_1} (c_{ij}+c_{j\,i})\,x_i+\sum_{i=1}^{m_1}
\sum_{l=m_1+1}^{m_2} X_lP\,a^{li}_j x_i
\]
that immediately yields
\begin{equation}\label{alij}
X_iX_jP=\frac{c_{ij}+c_{j\,i}}{2}+\sum_{l=m_1+1}^{m_2} X_lP\;a^{li}_j\,.
\end{equation}
Finally, formula \eqref{c_ij} follows by the equality $a^{li}_j=-a^{lj}_i$.
This is in turn a consequence of the Baker-Campbell-Hausdorff formula
for the second order bilinear terms.

\begin{rem}\rm\label{uniqueP}
Let $P$ be a polynomial of homogeneous degree at most 2, and suppose that $P(0)= p_0$ and $X_iP(x)= l_i(x)$, for $i = 1,\ldots,m_1$ where $l_i :\bbG \rightarrow \R$ are h-linear functions. Clearly we can compute $(X^\alpha P)(0)$ for each multiindex $\alpha$, $d(\alpha) \leq 2$, then by the previous proposition $P$ is uniquely determined.
\end{rem}
%
%

%
\vskip 0,3 cm
\begin{rem}\label{poly}\rm
Let $P: \bbG \rightarrow \R$ be a polynomial of homogeneous degree at most 2.
Let $P^{(2)}(x)$ the 2-homogeneous part of $P$, and define 
\[
\displaystyle \lambda = \max_{\|w\| = 1}\vert P^{(2)}(w) \vert.
\] 
For every $ 1\leq i,j \leq m_1$, we have the constants $X_iX_j P  = c_{i,j}$
and $X_iX_j(P(xh)) = c_{i,j}$ for every $x,h \in \G$. This is a consequence of the following general fact, given a smooth function $u$ and $X$, a left invariant vector fields on $\G$, then $ X(u(xh)) = (Xu)(xh)$. 
Consider $P(xh)$ as a function of $h$, applying Theorem \ref{taylor} we get a polynomial $P_x(h)$ such that
$$
P(xh) = P_x(h) + o(\|h\|^2).
$$
Notice that by the left translation invariance of $\mathcal P_2$, $P(xh)$ as a function of $h$ is a polynomial of homogeneous degree at most 2, hence $P(xh) = P_x(h)$. Clearly $P^{(0)}_x (h) = P(x)$ and $P^{(1)}_x(h) = \left\langle \nabla_H P(x),h\right\rangle$, as a consequence
\begin{equation}\label{p1}
P(xh)- P(x)- \left\langle \nabla_H P(x),h\right\rangle = P^{(2)}_x(h).
\end{equation}
By \eqref{p1} and previous considerations it follows that 
$$
c_{i,j} = X_iX_jP(h) = X_iX_jP^{(2)}(xh) = X_iX_jP^{(2)}_x(h),\quad i,j = 1,\ldots, m_1.
$$
Moreover all the other derivatives of $P_x^{(2)}$ are zero, thus we can conclude that $P^{(2)}_x(h) = P^{(2)}(h)$ by Proposition \ref{isom}. Finally we get
\begin{eqnarray}\nonumber
P(xh) &=& P(x) + \left\langle \nabla_HP(x),h \right\rangle + P^{(2)}(h)\\ \nonumber
&\geq& P(x) + \left\langle \nabla_HP(x),h \right\rangle - \lambda \|h\|^2.
\end{eqnarray}
\end{rem}
%
%
%
%
%

\section{Properties of the h-subdifferential}
In the sequel, $\mathbb B$ will denote the unit ball in $H_1$
centered at the origin with respect to the fixed scalar product on $\G$.
\begin{rem} \rm The set $\partial_Hu(x) \subset H_1$ is convex, in fact let $p,q \in \partial_H u(x)$ and choose $\lambda \in [0,1]$. Then $\lambda p + (1-\lambda)q \in \partial_H u(x)$, this follows adding the two inequalities 
\begin{eqnarray}\nonumber
\lambda u(xh) &\geq&  \lambda u(x) + \left\langle \lambda p, h \right\rangle \\ \nonumber
(1-\lambda)u(xh) &\geq&  (1-\lambda) u(x) + \left\langle (1-\lambda) q, h \right\rangle.
\end{eqnarray}
Moreover, let $u$ be an h-convex function in $\Omega$, then by Theorem \ref{rickly} $u$ is locally Lipschitz. Hence, for every $B_{x,r} \subseteq \Omega$, there exists $L>0$ depending on $x$ and $r>0$
such that
\begin{equation}\label{equibound}
\partial_Hu(y) \subseteq L \mathbb B \quad \mbox{for every } y \in B_{x,r}.
\end{equation}
\end{rem}
\begin{rem}\label{derudif}\rm 
At any h-differentiability point $x$,
there holds $\der_Hu(x)=\{\nabla_Hu(x)\}$, as noticed in \cite{DGN2}.
\end{rem}
Throughout, we use the symbol $co$ to denote the linear convex envelope in $H_1$.
Then our first important tool is the following
\begin{teo}\label{1carsubdif}
Let $u : \Omega\rightarrow \R$ be h-convex. Then for every $x \in \Omega$ we have
\begin{equation}\label{1envstar}
\partial_H u (x)\subseteq\bar{co}\left( \nabla_H^\star u(x)\right)\,.
\end{equation} 
\end{teo}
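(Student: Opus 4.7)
My approach is by contradiction. Suppose that some $p \in \partial_H u(x)$ does not lie in $\bar{co}(\nabla_H^\star u(x))$. Since this target is a closed convex subset of the Hilbert space $H_1$, the Hahn--Banach separation theorem---applied inside $H_1$, where a linear structure is available---produces $v \in H_1$ and $\alpha > 0$ such that $\langle p, v \rangle \geq M + \alpha$, where $M := \sup\{\langle q, v \rangle : q \in \nabla_H^\star u(x)\}$. Rickly's Theorem \ref{rickly} yields local Lipschitz continuity of $u$, so $\nabla_H u$ is locally bounded at its defining points and every sequence of h-differentiability points converging to $x$ has, up to a subsequence, its horizontal gradients converging to some element of $\nabla_H^\star u(x)$. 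Consequently, for any prescribed $\delta \in (0, \alpha)$ one finds $r > 0$ such that
\begin{equation}\nonumber
\langle \nabla_H u(y), v \rangle \;\leq\; M + \delta \qquad \text{for every h-differentiability point } y \in B_{x,r}.
\end{equation}

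To promote this pointwise estimate into an estimate on $u$ itself, I would follow the Fubini-type factorization approach used in \cite{Mag09}. Let $L = \{sv : s \in \R\}$ be the horizontal one-parameter subgroup generated by $v$, and let $V = (v^\perp \cap H_1) \oplus H_2 \oplus \cdots \oplus H_\iota$. The grading relations $[V_1, V_j] \subseteq V_{j+1}$ show that $V$ is a Lie subalgebra and that $[v, V] \subseteq V$, so $V$ corresponds to a normal subgroup and $\G = V \rtimes L$ is a semidirect product, whose Haar measure factors as a product of Haar measures on $V$ and $L$. Writing $x = z_x \cdot s_x v$ with $z_x \in V$ and $s_x \in \R$, Pansu's theorem combined with Fubini on this factorization yields a conull set $G \subseteq V$ such that for every $\eta \in G$ the one-variable function $\phi_\eta(t) := u(z_x \eta \cdot tv)$ is h-differentiable at a.e.\ $t$ on its domain, with derivative $\phi_\eta'(t) = \langle \nabla_H u(z_x \eta \cdot tv), v \rangle$ at every such $t$; here I use that $v$ is horizontal, so $t_1 v \cdot t_2 v = (t_1 + t_2)v$ inside $L$.

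Each $\phi_\eta$ is convex on the relevant interval by h-convexity of $u$ restricted to horizontal lines, hence absolutely continuous, and the fundamental theorem of calculus applies. For $s > 0$ small and $\eta \in G$ with $\|\eta\|$ small enough that $\{z_x \eta \cdot tv : s_x \leq t \leq s_x + s\} \subseteq B_{x,r}$, the pointwise bound above gives
\begin{equation}\nonumber
\phi_\eta(s_x + s) - \phi_\eta(s_x) \;=\; \int_{s_x}^{s_x + s} \langle \nabla_H u(z_x \eta \cdot tv), v\rangle\, dt \;\leq\; s(M + \delta).
\end{equation}
Letting $\eta \to 0$ through a sequence in $G$ and using continuity of $u$, one obtains $u(x \cdot sv) - u(x) \leq s(M + \delta)$. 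On the other hand, the defining inequality of $p \in \partial_H u(x)$ gives $u(x \cdot sv) - u(x) \geq s \langle p, v\rangle \geq s(M + \alpha)$, so dividing by $s > 0$ yields $\alpha \leq \delta$, contradicting $\delta < \alpha$. The main obstacle is this Fubini step: unlike in $\R^n$, one has to build an honest semidirect factorization of $\G$ adapted to the horizontal direction $v$ and verify the factorization of Haar measure, which is exactly what replaces the Euclidean mollification argument, since group convolution does not commute with horizontal derivatives.
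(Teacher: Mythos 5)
Your proof is correct and follows essentially the same route as the paper's: contradiction via Hahn--Banach separation inside $H_1$, a uniform bound on $\langle \nabla_H u(y), v\rangle$ for differentiability points $y$ near $x$, a Fubini argument over a semidirect factorization $\G = N \rtimes \exp(\R v)$ with factored Haar measure, integration of the gradient bound along horizontal lines in nearby cosets, and a limit by continuity that contradicts the subdifferential inequality. The only cosmetic differences are that you construct the normal subgroup explicitly as $(v^\perp \cap H_1)\oplus V_2 \oplus \cdots \oplus V_\iota$ (where the paper cites Proposition 2.8 of \cite{Mag09}) and you contradict one-sided growth along $v$ directly, instead of normalizing $p=0$ and contradicting minimality of $u$ at $x$ along the line through strict monotonicity.
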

\begin{proof}
Suppose that there exists $p \in \partial_H u(x)$ such that 
$ p \notin \bar{co}\left( \nabla_H^\star u(x)\right)$.
We can assume that $p = 0$, otherwise one considers $v(x) = u(x) - \left\langle p, \pi_1 (x)\right\rangle$, that is still h-convex.
Since $\bar{co}\left( \nabla_H^\star u(x)\right)$ is a closed convex subset of $H_1$,
the Hahn-Banach separation theorem can be applied to this set and the origin,
hence there exists $q \in H_1$, $d(0,q)=1$, and $\alpha >0$ such that
\begin{equation}\label{hbappl}
\left\langle z, q\right\rangle > \alpha \qquad \forall z \in  \nabla_H^\star u(x).
\end{equation}
We claim the existence of $r >0$ such that  $B_{x,r} \subset\Omega$ and $ \left\langle \nabla_H u(y), q\right\rangle > \frac{\alpha}{2}$ for every $y \in B_{x,r}$ where $u$ is h-differentiable. By contradiction, suppose there exist sequences $r_j \rightarrow 0$ and $y_j \in B_{x,r_j}$ such that $ \left\langle \nabla_H u(y_j), q\right\rangle \leq \frac{\alpha}{2}$, then possibly passing to a subsequence we have $y_j \rightarrow x$ and $\nabla_H u(y_j) \rightarrow z \in \nabla_H^\star u(x)$, with $\left\langle z, q\right\rangle \leq \frac{\alpha}{2}$ and this conflicts
with \eqref{hbappl}. Denote by $r$ the positive number having the previous property. 
Let $Q = \left\lbrace \delta_t q \ :\ t \in \R \right\rbrace$ and consider $\mu$ the Haar measure on $\bbG$. By Proposition 2.8 in \cite{Mag09} there exists a normal subgroup $ N \subset \bbG$, such that $N\rtimes Q = \bbG$. Moreover there exist  $\nu_q$ and $\mu_N$, respectively Haar measures on $Q$ and $N$ such that for every measurable set $A \subset \bbG$
\begin{equation}\label{fubG}
\mu(A) = \int_N \nu_q(A_n)\ d\mu_N(n)
\end{equation}
where $A_n = \left\lbrace h \in Q \ :\ nh \in A\right\rbrace$. Let $P$ be the set of h-differentiable points of $u$, which has full measure in $\Omega$. From \eqref{fubG} it follows that for $\mu_N$-a.e. $n \in N$, $\nu_Q(Q\setminus n^{-1}P) = 0$. Then for $\mu_N$-a.e. $n\in N$, $n\delta_tq \in P$ for a.e. $t \in \R$. Let $\bar n \in N $ and $\delta_{\bar t}q \in Q$ respectively the unique elements in $N$ and $Q$ such that $x = \bar n \delta_{\bar t}q$. Let $\epsilon >0 $ and $s >0$ such that $B^N_{\bar n,s}\cdot B^Q_{\delta_{\bar t}q,\epsilon} \subset B_{x,r}$, where $B^N_{\bar n,s}$ and $B^Q_{\delta_{\bar t}q,\epsilon}$ are open balls respectively in $N$ and $Q$. Fix a point $n \in B^N_{\bar n,s}$ where $u(nh)$ is $\nu_q$-a.e. differentiable and consider the convex function $v(t) = u(n\delta_tq)$, for $\nu_q$-a.e. $\delta_tq$, $t \in (-\epsilon + \bar t ,\epsilon + \bar t)$ we have
\begin{equation}\nonumber
v'(t) = \left\langle \nabla_H u (n\delta_tq), q \right\rangle > \frac{\alpha}{2}.
\end{equation}
Integrating the previous inequality, taking into account the Lipschitz regularity of $v$ we get
\begin{equation}\nonumber
v(t_1)-v(t_2) = u(n\delta_{t_1}q)-u(n\delta_{t_2}q) > \frac{\alpha}{2}(t_1-t_2)
\end{equation}
where $-\epsilon + \bar t < t_2 < t_1< \epsilon + \bar t$. Now let $n_j \rightarrow \bar n \in B^N_{\bar n,s}$ such that $n_jh$ is a differentiable point of the map $h \rightarrow u(n_jh)$ for every $j$ and $\nu_q$-a.e. $h$, by the previous considerations we have
\begin{equation}\nonumber
u(n_j\delta_{t_1}q)-u(n_j\delta_{t_2}q) > \frac{\alpha}{2}(t_1-t_2) \qquad -\epsilon + \bar t< t_2 < t_1< \epsilon + \bar t
\end{equation}
finally we can pass to the limit in $j$ and get the strict monotonicity of $u(\bar n \delta_tq)$ i.e.
\begin{equation}\label{monotone}
u(\bar n \delta_{t_1}q)-u(\bar n \delta_{t_2}q) \geq \frac{\alpha}{2}(t_1-t_2) \qquad -\epsilon + \bar t< t_2 < t_1< \epsilon + \bar t.
\end{equation}
Recall that $0 \in \partial_H u(x)$, i.e. $u(xh) \geq u(x)$ whenever $[0,h] \subseteq H_1\cap x^{-1}\Omega$.
Thus, $u(\bar n \delta_tq) \geq u(\bar n \delta_{\bar t}q)$ for all 
$t \in (\bar t -\epsilon,\bar t + \epsilon)$, in contrast with the monotonicity \eqref{monotone}.
\end{proof}
Joining Theorem \ref{1carsubdif} with Theorem 9.2 of \cite{DGN2}, we immediately get
\begin{Cor}
Let $u : \Omega \rightarrow \R$ be an h-convex function. There exists $C = C(\bbG) >0 $ such that for every ball $B(x,r) \subset \bbG$  one has
\begin{equation}\label{GarSub}
\sup_{ \substack{ p \in \partial_H u(y)\\ y \in  B_{x,r} }} \vert p \vert  
\leq \dfrac{C}{r} \dfrac{1}{\vert B_{x,15r} \vert} \int_{B_{x,15r}} \vert u(y) \vert dy.
\end{equation}
\end{Cor}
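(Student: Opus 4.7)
The plan is to combine the inclusion from Theorem \ref{1carsubdif} with the pointwise gradient bound supplied by Theorem 9.2 of \cite{DGN2}, which controls the supremum of the horizontal gradient over a ball by the $L^1$ mean of $u$ over a slightly larger ball.

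First I would fix $y \in B_{x,r}$ and $p \in \partial_H u(y)$. By Theorem \ref{1carsubdif}, $p$ lies in $\bar{co}\left(\nabla_H^\star u(y)\right)$. Since the Euclidean norm on $H_1$ is continuous and convex, its supremum over a closed convex hull coincides with its supremum over the generating set; hence
\[
\vert p \vert \leq \sup\{\vert q \vert : q \in \nabla_H^\star u(y)\}.
\]

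Next, by the definition \eqref{limgrad} of the h-reachable gradient, every $q \in \nabla_H^\star u(y)$ arises as a limit $\nabla_H u(y_k) \to q$ along h-differentiability points $y_k \to y$. Since $y \in B_{x,r}$ and this ball is open, the sequence $y_k$ lies in $B_{x,r}$ for $k$ large, so $\vert q \vert$ is bounded above by the supremum of $\vert \nabla_H u(z) \vert$ taken over h-differentiability points $z \in B_{x,r}$. Combined with the previous step, this estimate is uniform in $y \in B_{x,r}$ and $p \in \partial_H u(y)$.

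Finally I would invoke Theorem 9.2 of \cite{DGN2}, which supplies exactly the bound
\[
\sup\bigl\{\vert \nabla_H u(z) \vert : z \in B_{x,r},\ u \text{ h-differentiable at } z\bigr\} \leq \frac{C}{r}\, \frac{1}{\vert B_{x,15r} \vert} \int_{B_{x,15r}} \vert u(y) \vert\, dy
\]
for a constant $C = C(\bbG)$. Chaining the three inequalities completes the proof, with the same constant $C$. The only potential obstacle is cosmetic: should Theorem 9.2 of \cite{DGN2} be formulated with a different enlargement factor in place of $15$, or with the supremum of $\vert \nabla_H u \vert$ taken over a slightly larger ball than $B_{x,r}$, the discrepancy is harmlessly absorbed into $C$. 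Otherwise the argument is a direct two-line composition of the two ingredients, with no delicate step.
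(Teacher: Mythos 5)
Your proof is correct and takes essentially the same route as the paper, which derives this corollary in a single line by joining Theorem~\ref{1carsubdif} with Theorem 9.2 of \cite{DGN2} --- precisely the composition you spell out. The details you add (passing the norm bound through the closed convex hull, and noting that the approximating h-differentiability points eventually lie in the open ball $B_{x,r}$) are exactly the routine steps the paper leaves implicit.
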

Given a set $E \subset \G$ and $\rho >0$, by $I(E,\rho)$, we denote the open set
$$
I(E,\rho) = \left \lbrace x \in \G ,\ d(x,E) < \rho \right \rbrace.
$$
\begin{prop}\label{unifConv}
Let $u_i : \Omega \rightarrow \R$ be a sequence of h-convex functions, $\Omega \subset \bbG$ open. Suppose that $u_i$ uniformly converge on
compact sets to an h-convex function $u$. Let $x \in \Omega$ and let $(x_i)$ be a sequence
in $\Omega$ converging to $x$. Then for every $\epsilon >0$, there exists $i_0 \in \mathbb N$ such that
\begin{eqnarray}\label{unifSub}
&&\partial_H u_i(x_i) \subseteq \partial_H u(x) + \epsilon \mathbb B \quad \mbox{for all}\;\; i \geq i_0.
\end{eqnarray}
In addition, if $u$ is everywhere h-differentiable in $\Omega$, then for every compact set $K \subset \Omega$ and every $\epsilon >0$, there exist $i_{\epsilon,K}$ such that
\begin{eqnarray}\label{unifSub2}
&&\partial_H u_i(y) \subseteq \nabla_H u(y) + \epsilon \mathbb B \quad
\mbox{for all} \;\; i \geq i_{\epsilon,K}, \;\mbox{whenever}\; y\in K.
\end{eqnarray}
\end{prop}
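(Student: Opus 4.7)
My approach is a contradiction argument based on local equiboundedness of h-subdifferentials combined with passage to the limit in the defining inequality \eqref{hsub}. For part (1), suppose \eqref{unifSub} fails. Then, along a subsequence, there exist $p_i \in \partial_H u_i(x_i)$ with $\dist(p_i, \partial_H u(x)) \geq \epsilon$. Since $u_i \to u$ uniformly on a compact neighborhood of $x$ and $u$ is locally Lipschitz by Theorem \ref{rickly}, the family $\{u_i\}$ is uniformly bounded in $L^1(B_{x,15r})$ for some $r>0$ with $B_{x,15r}\subset\Omega$. Applying the subdifferential estimate \eqref{GarSub} to each $u_i$ yields a uniform bound on $|p_i|$, so, up to extraction, $p_i \to p$ in $H_1$.

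Next, I would show $p \in \partial_H u(x)$, which contradicts $\dist(p_i, \partial_H u(x)) \geq \epsilon$. Fix any $h \in H_1$ with $[0,h] \subseteq x^{-1}\Omega$; since $x_i \to x$ and $\Omega$ is open, $x_i\cdot[0,h] \subset \Omega$ for $i$ large, so
\begin{equation*}
u_i(x_i h) \geq u_i(x_i) + \lan p_i, h\ran .
\end{equation*}
The uniform convergence of $u_i$ to $u$ on the compact set $x\cdot[0,h]$ together with the continuity of $u$ (again Theorem \ref{rickly}, applied to handle the slight mismatch $x_i h$ versus $xh$) allows passage to the limit, giving $u(xh) \geq u(x) + \lan p, h\ran$. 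Since $h$ was arbitrary, $p \in \partial_H u(x)$, the required contradiction.

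For part (2), suppose $u$ is everywhere h-differentiable in $\Omega$, so by Remark \ref{derudif} one has $\partial_H u(y) = \{\nabla_H u(y)\}$ for every $y\in\Omega$. I would again argue by contradiction: if \eqref{unifSub2} fails for some compact $K\subset\Omega$ and some $\epsilon>0$, I obtain indices $i_k\to\infty$, points $y_k\in K$, and $p_k \in \partial_H u_{i_k}(y_k)$ with $|p_k - \nabla_H u(y_k)| \geq \epsilon$. By compactness of $K$, a subsequence satisfies $y_k \to y\in K$; the equiboundedness argument of part (1), applied on a compact neighborhood of $K$, gives (up to a further subsequence) $p_k \to p$. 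Applying part (1) to the sequence $(u_{i_k}, y_k)$ and the limit $(u,y)$ yields $p \in \partial_H u(y) = \{\nabla_H u(y)\}$, so $p = \nabla_H u(y)$. Applying part (1) instead to the constant sequence $u_j=u$ with base points $y_k\to y$ shows $\nabla_H u(y_k) \to \nabla_H u(y)$, i.e.\ $\nabla_H u$ is continuous at $y$. Combining, $|p_k - \nabla_H u(y_k)| \to 0$, contradicting $|p_k - \nabla_H u(y_k)| \geq \epsilon$.

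The main obstacle I anticipate is ensuring the uniform boundedness of $\{p_i\}$, which is the prerequisite for every subsequence extraction in the argument; this is handled cleanly by \eqref{GarSub}, provided one first observes that $\{u_i\}$ is uniformly bounded on a fixed ball $B_{x,15r}\subset\Omega$ (respectively on a compact neighborhood of $K$), a consequence of uniform convergence on compact sets. The rest of the argument is a direct passage to the limit in the defining inequality of the h-subdifferential.
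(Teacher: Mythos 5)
Your proof is correct and takes essentially the same route as the paper's: a contradiction argument using equiboundedness of the subdifferentials via \eqref{GarSub}, passage to the limit in the defining inequality of $\partial_H u_i(x_i)$ (with the admissibility of $h$ for $x_i$ secured by openness of $\Omega$ and continuity of left translation), and, for the second part, a double application of the first part at the limit point $y$ together with $\partial_H u(y)=\{\nabla_H u(y)\}$. The only cosmetic difference is that the paper organizes the limit passage through an exhaustion of $\Omega$ by compact sets, whereas you invoke uniform convergence on a compact neighbourhood of $x\cdot[0,h]$ directly.
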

\begin{proof}
We argue by contradiction in both cases,  hence we suppose that there exist $\epsilon >0$ and a subsequence $p_{i_k} \in \partial_H  u_{i_k} (x_{i_k})$ such that for every $p \in \partial_H u(x)$ we have $ \vert p_{i_k} - p\vert > \epsilon$. By estimate \eqref{GarSub} one easily observes that the sets $\partial_H u_i(x_i)$ are equibounded, thus possibly passing to a subsequence, $p_{i_k} \rightarrow \nolinebreak q$ and $\mbox{dist}(p_{j_k},\partial_Hu(x_{j_k}))\geq \epsilon$. Define a monotone family of compact sets  $K_\tau = \left\lbrace x \in D_{\tau }\ :\ d(x,\Omega^c) \geq \frac{1}{\tau} \right\rbrace$, such that $\bigcup_{\tau>0}K_\tau = \Omega$. Let $j_l$ be a subsequence such that $ p_{j_l} \rightarrow q$ and $ \Vert u_{i_l} - u \Vert_{L^\infty(K_{l})} < \frac{1}{l}$. Recall that $p_{j_l} \in \partial_H u_{j_l}(x_{j_l})$, then
\begin{equation}\nonumber
u_{j_l}(x_{j_l}h) \geq u_{j_l}(x_{j_l}) + \left\langle p_{j_l}, h\right\rangle \qquad \mbox{whenever } [0,h] \subseteq H_1\cap x_{j_l}^{-1}\Omega.
\end{equation}
By uniform convergence for $l$ sufficiently large, we get
\begin{equation}\label{unif1}
u(x_{i_l}h) \geq u(x_{i_l}) - \frac{2}{l} + \left\langle p_{i_l},h\right\rangle \qquad \mbox{whenever } [0,h] \subseteq H_1\cap x_{i_l}^{-1}K_{l}.
\end{equation}
Take $[0,h] \subseteq (x^{-1}\Omega)\cap H_1$, then there exists $l_0$ such that for every $ l > l_0$, $[0,h] \subset x^{-1}K_{l}\cap H_1$. Since $\Omega$ is an open set there exists $\rho > 0$ such that $ I(x\cdot[0,h],\rho) \subset K_{l} $. By continuity of left translation there exists $j(\rho)$ such that for every $j_l > j(\rho)$,
$$
x_{j_l}\cdot[0,h] \subseteq I(x\cdot[0,h],\rho),
$$
hence $[0,h] \subseteq x_{j_l}^{-1}K_{l}$. Then \eqref{unif1} holds with $h$ and passing to 
the limit in $l$ we get
\begin{equation}\label{in1}
u(xh) \geq u(x) + \left\langle q,h\right\rangle,
\end{equation}
thus $q \in \partial_H u(x)$, getting a contradiction. Now suppose that $u$ is everywhere h-differentiable. Again, by contradiction there exist a compact set $W \subset \Omega$, $\epsilon > 0$ and a subsequence $j_l$ such that for all $l$, $x_{j_l}\in W$ we have 
$$
\partial_H u_{j_l}(x_{j_l}) \nsubseteq \partial_Hu(x_{j_l}) + \epsilon\mathbb B.
$$
Then, we can find $p_{j_l} \in \partial_H u_{j_l}(x_{j_l})$ such that $\mbox{dist}(p_{j_l},\partial_Hu(x_{j_l})) \geq \epsilon$, for all $l>0$. As before, we can suppose that, possibly passing to a subsequence, $x_{j_l} \rightarrow \bar{x} \in W$ and $p_{j_l} \rightarrow \bar{p}$. By Remark 2 and h-differentiability at $\bar x$, taking into account the first part of this proposition, we get that for every $\gamma >0$ there exists $j_{l'}$ such that 
\begin{eqnarray}\nonumber
\partial_H u_{j_l}(x_{j_l}) &\subset& \nabla_Hu(\bar{x}) + \gamma \mathbb B\\ \nonumber
\partial_H u(x_{j_l}) &\subset& \nabla_H u(\bar{x}) + \gamma \mathbb B, \qquad \forall j_l > j_l'.
\end{eqnarray}
From the previous inclusions, it follows that
$$
\epsilon \leq \mbox{dist}(p_{j_k}, \partial_H u(x_{j_k})) \leq 2\gamma.
$$
If we choose $\gamma = \frac{\epsilon}{4}$, then reach a contradiction, concluding the proof.
\end{proof}
Taking the constant sequence $u_i = u$ in the previous proposition
and taking into account \eqref{unifSub}, we immediately reach the following
\begin{Cor}\label{closesub}
Let $\Omega$ be an open set of $\G$ and let $u:\Omega\rightarrow \R$ be an h-convex function, 
then $\partial_H u : \Omega \rightarrow \mathcal P (H_1)$ has closed graph.
\end{Cor}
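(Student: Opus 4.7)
The plan is to exploit the hint directly: apply Proposition \ref{unifConv} to the constant sequence $u_i \equiv u$, which is obviously h-convex and uniformly convergent to $u$. First I would unpack what ``closed graph'' means here: one must show that whenever $x_i \to x$ in $\Omega$ and $p_i \to p$ in $H_1$ with $p_i \in \partial_H u(x_i)$, we have $p \in \partial_H u(x)$.

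Before invoking the proposition, I would observe that $\partial_H u(x)$ is itself closed in $H_1$. This is immediate from the defining inequality \eqref{hsub}: if $q_n \in \partial_H u(x)$ with $q_n \to q$, then for every $h \in H_1$ with $[0,h]\subset x^{-1}\Omega$ we have $u(xh) \geq u(x) + \langle q_n, h\rangle$, and passing to the limit in $n$ gives $u(xh)\geq u(x) + \langle q, h\rangle$, so $q \in \partial_H u(x)$.

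Now I would apply \eqref{unifSub} of Proposition \ref{unifConv} with $u_i = u$ and the sequence $x_i \to x$: for every $\epsilon > 0$ there exists $i_0$ such that
\[
\partial_H u(x_i) \subseteq \partial_H u(x) + \epsilon\,\mathbb B \qquad \forall\, i\geq i_0.
\]
Hence for $i\geq i_0$ one has $\operatorname{dist}(p_i,\partial_H u(x)) \leq \epsilon$. Letting $i\to\infty$ and using continuity of the distance function together with $p_i\to p$ yields $\operatorname{dist}(p,\partial_H u(x)) \leq \epsilon$. Since $\epsilon > 0$ was arbitrary, $\operatorname{dist}(p,\partial_H u(x))=0$, and by the closedness of $\partial_H u(x)$ established above we conclude $p\in \partial_H u(x)$, which is exactly the closed graph property.

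There is essentially no obstacle here: the heavy lifting was already done in Proposition \ref{unifConv} (whose proof uses the Hahn--Banach separation encoded via Theorem \ref{1carsubdif} and the local equiboundedness \eqref{equibound} of the h-subdifferential). The corollary is only a matter of specialising that result to a constant sequence and combining it with the trivial closedness of each fiber $\partial_H u(x)$.
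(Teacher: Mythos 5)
Your proof is correct and takes essentially the same route as the paper, which likewise deduces the corollary by applying \eqref{unifSub} of Proposition~\ref{unifConv} to the constant sequence $u_i = u$. Your only addition---the elementary closedness of each fiber $\partial_H u(x)$, needed to pass from $\operatorname{dist}(p,\partial_H u(x))=0$ to actual membership $p\in\partial_H u(x)$---is a detail the paper leaves implicit, and your verification of it from the defining inequality \eqref{hsub} is correct.
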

The previous corollary allows us to complete the proof of Theorem~\ref{carsubdif}.
\begin{proof}[Proof of Theorem~\ref{carsubdif}]
By virtue of Theorem~\ref{1carsubdif}, we have only to prove the inclusion
\[
\bar{co}\left( \nabla_H^\star u(x)\right) \subseteq \partial_H u (x).
\] 
By Corollary~\ref{closesub}, the set-valued map $\partial_H u$ has closed graph
and $\partial_H u (y) = \left\lbrace \nabla_H u(y)\right\rbrace$ at any h-differentiability
point $y$ of $u$. This immediately yields 
\begin{equation}\nonumber
\nabla_H^\star u(x)  \subseteq \partial_H u (x).
\end{equation}
Moreover $\partial_H u(x)$ is a convex set in $H_1$ for every $x\in \bbG$, then
our claim follows.
\end{proof}
\begin{rem}\rm
The a.e. h-differentiability of an h-convex function $u$ implies that
$\nabla_H^*u(x)\neq\emptyset$ for all $x\in\Omega$. Then \eqref{envstar} 
implies that $\partial_Hu(x)\neq\emptyset$ for all $x\in\Omega$.
This fact was first proved in \cite{CaPi1}.
The opposite implication can be found in \cite{DGN2} for h-convex domains. 
The same implication holds for h-convex functions on open sets,
since the everywhere h-subdifferentiability implies the 
everywhere Euclidean subdifferentiability along horizontal lines.
Then the Euclidean characterization of convexity through the subdifferential
gives the Euclidean convexity  
along horizontal lines, that coincides with the notion of h-convexity.
\end{rem}
%
%
%
%
%
%
%
%
%
%
%
%
%
%
%
%
%
%
%
%
%
%
%
%
%
%
%
%
%
%
%
%
%
%
%
\begin{defin}\label{subjet}\rm
Let $u : \Omega \rightarrow \R$ and $\Omega \subset \bbG$ an open subset. Then we define the first order \textit{sub jet} of $u$ at $x \in \Omega$ as
\begin{equation}\nonumber
J^{1,-}_u (x) = \left\lbrace p \in H_1\ :\ u(xh) \geq u(x) + \left\langle p, h \right\rangle + o(\| h \|),\; \mbox{if}\; [0,h]\subset H_1\cap x^{-1}\Omega \right\rbrace 
\end{equation}
\end{defin}
\begin{rem}\label{eqSJet}\rm
Let $u$ be an h-convex function in $\Omega$. Then $u$ is h-subdifferentiable at $x$ if and only if $J^{1,-}_u(x) \neq \emptyset$. Moreover $J^{1,-}_u(x) = \partial_H u(x)$. For the reader's sake we give the proof of this property, in the Heisenberg group it has been proved in \cite{CaPi1}. The inclusion $J^{1,-}_u(x) \supseteq \partial_H u(x)$ follows by definition. Now let $p \in J^{1,-}_u(x)$, and fix $[0,h] \subseteq x^{-1}\Omega\cap H_1$ . Then $p$ satisfies
$$
u(x\delta_t h) \geq u(x) + \left\langle p, th \right\rangle + o(\| th \|).
$$
By h-convexity of $u$, $tu(xh) + (1-t)u(x) \geq u(x\delta_th)$ which implies
$$
u(xh) \geq u(x) +  \left\langle p, h \right\rangle + \dfrac{o(\|th \|)}{t}.
$$
Now the claim follows letting $t\rightarrow 0$.
\end{rem}
\begin{defin}\label{lambdasub}\rm
Let $u : \Omega \rightarrow \R$ and $\Omega \subset \bbG$ an open subset. Given $\lambda \geq 0$ we define the $\lambda$-subdifferential of $u$ at $x \in \Omega$ as
\begin{equation}\nonumber
\partial^\lambda_H u (x) = \left\lbrace p \in H_1\ :\ u(xh) \geq u(x) + \left\langle p, h \right\rangle - \lambda\| h \|^2,\quad \mbox{whenever } [0,h] \subseteq H_1\cap x^{-1}\Omega \right\rbrace.
\end{equation}
Notice that $\partial^0_H u(x)$ coincides with the h-subdifferential $\partial_H u(x)$.
\end{defin}
\begin{lem}\label{hconvexP}
Consider a function $u = U + P$ in $\Omega$. Let $U$ be h-convex and $P$ a polynomial with $\mbox{h-deg}P \leq 2$, denote by $P^{(2)}$ the $2$-homogeneous part of $P$.  Define $\displaystyle \lambda = \max_{\|w\| = 1} \vert P^{(2)}(w) \vert $, then
\begin{equation}\nonumber
\partial^\lambda_H u(x)  \supseteq \partial_H U(x) + \nabla_HP(x).
\end{equation}
\end{lem}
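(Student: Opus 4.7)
The plan is to unwind the definitions and show directly that any element of $\partial_H U(x) + \nabla_H P(x)$ lies in $\partial^\lambda_H u(x)$. Fix $p \in \partial_H U(x)$, set $q = p + \nabla_H P(x)$, and take any $h \in H_1$ with $[0,h] \subseteq H_1 \cap x^{-1}\Omega$. The goal is to verify
\[
u(xh) \geq u(x) + \langle q, h\rangle - \lambda\|h\|^2.
\]

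First I would write $u(xh) - u(x) = \bigl[U(xh) - U(x)\bigr] + \bigl[P(xh) - P(x)\bigr]$ and handle the two brackets separately. The first one is controlled by the hypothesis $p \in \partial_H U(x)$, which (since $h$ is horizontal and $[0,h] \subseteq x^{-1}\Omega$) gives $U(xh) - U(x) \geq \langle p, h\rangle$. For the second one, I would invoke Remark~\ref{poly} applied to the polynomial $P$, which has homogeneous degree at most $2$: formula \eqref{p1} there gives exactly
\[
P(xh) - P(x) - \langle \nabla_H P(x), h\rangle = P^{(2)}(h).
\]

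It remains to bound $P^{(2)}(h)$ from below by $-\lambda\|h\|^2$. This is where the choice of $\lambda$ enters: since $P^{(2)}$ is $2$-homogeneous with respect to the dilations $\delta_r$, for $h \neq 0$ we have
\[
P^{(2)}(h) = \|h\|^2\, P^{(2)}\!\bigl(\delta_{1/\|h\|} h\bigr) \geq -\|h\|^2 \max_{\|w\|=1}|P^{(2)}(w)| = -\lambda\|h\|^2,
\]
and the inequality is trivial when $h = 0$. Adding the estimate for $U$ and the estimate for $P$ yields precisely $u(xh) \geq u(x) + \langle q, h\rangle - \lambda \|h\|^2$, so $q \in \partial^\lambda_H u(x)$, as required.

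There is no real obstacle here: the proof is essentially a bookkeeping exercise combining the sub-gradient inequality for $U$ with the exact two-term Taylor expansion \eqref{p1} for polynomials of h-degree at most $2$. The only point that deserves a line of justification is the homogeneity-based bound $P^{(2)}(h) \geq -\lambda\|h\|^2$, which requires that $\|\cdot\|$ be the homogeneous norm (so that $\delta_{1/\|h\|}h$ has norm one) — this is consistent with the conventions fixed in Section~\ref{SectionBN}.
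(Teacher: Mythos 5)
Your proof is correct and follows essentially the same route as the paper: combine the subgradient inequality for $U$ with the inequality $P(xh)\geq P(x)+\langle\nabla_H P(x),h\rangle-\lambda\|h\|^2$, which the paper simply quotes from Remark~\ref{poly}. The only difference is cosmetic: you re-derive the final estimate of that remark (via \eqref{p1} and the $2$-homogeneity bound $P^{(2)}(h)\geq-\lambda\|h\|^2$) instead of citing it directly, which is fine.
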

\begin{proof}
Recall that by Remark \ref{poly}, for every $x,h \in \bbG$ we have
$$
P(xh)  \geq  P(x) + \left\langle \nabla_H P(x), h \right\rangle  - \lambda\| h \|^2.
$$
Let $p \in \partial_H U(x)$ then by definition of h-subdifferential and the previous inequality
$$
U(xh) + P(xh) \geq U(x) + P(x) + \left\langle p + \nabla_H P(x), h \right\rangle  - \lambda\| h \|^2,
$$
whenever $[0,h] \subseteq x^{-1}\Omega \cap H_1$. This implies that 
$p+\nabla_H P(x) \in \partial^\lambda_H u(x)$.
\end{proof}

\begin{prop}\label{hconvexP2}
Let $u = U + V : \Omega \rightarrow \R$, where $U$ is an h-convex function, and $V \in C^1_H(\Omega)$. Fix $\lambda \geq 0$, then for every $x \in \Omega$ we have
\begin{equation}\nonumber
\partial^\lambda_H u(x) \subseteq \partial_H U(x) + \nabla_HV(x).
\end{equation}
\end{prop}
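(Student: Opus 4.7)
The plan is to leverage the sub-jet characterization of the h-subdifferential (Remark~\ref{eqSJet}) to strip off the regular part $V$ and reduce the $\lambda$-subdifferential condition to the ordinary h-subdifferential condition for $U$.

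First, I fix $p \in \partial_H^\lambda u(x)$ and aim to show that $p - \nabla_H V(x) \in \partial_H U(x)$. By the defining inequality of $\partial_H^\lambda u(x)$, for every $h \in H_1$ with $[0,h] \subseteq H_1 \cap x^{-1}\Omega$ one has
\[
U(xh) + V(xh) = u(xh) \geq u(x) + \langle p, h\rangle - \lambda \|h\|^2 = U(x) + V(x) + \langle p, h\rangle - \lambda \|h\|^2.
\]
Since $V \in C^1_H(\Omega)$, $V$ is h-differentiable at $x$, so $V(xh) = V(x) + \langle \nabla_H V(x), h\rangle + o(\|h\|)$ as $\|h\| \to 0$ with $h \in H_1$. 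Substituting this expansion and absorbing $-\lambda \|h\|^2$ into $o(\|h\|)$ yields
\[
U(xh) \geq U(x) + \langle p - \nabla_H V(x), h\rangle + o(\|h\|)
\]
for every horizontal $h$ (sufficiently small) with $[0,h] \subset H_1 \cap x^{-1}\Omega$.

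This is precisely the condition that $p - \nabla_H V(x) \in J^{1,-}_U(x)$, the first order sub jet of $U$ at $x$, as defined in Definition~\ref{subjet}. Since $U$ is h-convex, Remark~\ref{eqSJet} gives $J^{1,-}_U(x) = \partial_H U(x)$, so $p - \nabla_H V(x) \in \partial_H U(x)$, which yields $p \in \partial_H U(x) + \nabla_H V(x)$, as desired.

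There is no real obstacle here; the main (modest) point is that the quadratic error $\lambda \|h\|^2$ is swallowed by $o(\|h\|)$, which is exactly what makes the $\lambda$-subdifferential of a $C^1_H$ perturbation of an h-convex function still give first-order information about $U$. The appeal to Remark~\ref{eqSJet} is essential, since without the h-convexity of $U$ the sub jet need not coincide with the h-subdifferential.
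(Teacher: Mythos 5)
Your proof is correct and follows essentially the same route as the paper: both expand $V(xh)=V(x)+\langle\nabla_H V(x),h\rangle+o(\|h\|)$ using its $C^1_H$ regularity, absorb the quadratic term $-\lambda\|h\|^2$ into $o(\|h\|)$, and invoke Remark~\ref{eqSJet} to identify $J^{1,-}_U(x)$ with $\partial_H U(x)$ via the h-convexity of $U$. Your closing observation about why the sub-jet characterization is the essential ingredient matches the structure of the paper's argument exactly.
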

\begin{proof}
In fact let $p \in \partial^\lambda_H u(x)$ and $[0,h] \subseteq H_1\cap x^{-1}\Omega$
\begin{eqnarray}\nonumber
u(xh) &\geq& u(x) + \left\langle p,h\right\rangle - \lambda\|h\|^2 \\ \nonumber
U(xh) + V(xh)  &\geq& U(x) + V(x) + \left\langle \nabla_H V(x), h \right\rangle  + \left\langle p - \nabla_HV(x),h\right\rangle - \lambda\| h \|^2
\end{eqnarray}
Then by the smoothness of $P$ it follows that
\begin{equation}\nonumber
U(xh) \geq U(x) + \left\langle p - \nabla_HV(x),h\right\rangle + o(\| h \|)
\end{equation}
recall that $U$ is h-convex thus by Remark \ref{eqSJet} , $p - \nabla_HV(x) \in \partial_H U(x)$. Therefore the inclusion is proved.
\end{proof}

In the following theorem we extend the classical non-smooth mean value theorem to stratified groups.
\begin{teo}\label{meanval}
Let $u = U + P$, where $U$ is h-convex and $P$ is a polynomial, with $\mbox{h-deg}\,P \leq 2$
and  $\lambda = \max_{\|w\|=1} \vert P^{(2)}(w)\vert$. Then for every $x \in \Omega$ and every $h$ such that $[0,h]\subseteq H_1\cap x^{-1}\Omega$,  there exist $t \in [0,1]$ and $p \in \partial^\lambda_H u (x\delta_th)$ such that
\begin{equation}\nonumber
u(xh) - u(x) =  \left\langle p ,h \right\rangle.
\end{equation}
\end{teo}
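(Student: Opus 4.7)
My approach is to reduce the assertion to a one-dimensional mean value argument along the horizontal segment $x\cdot[0,h]$ and then to lift the resulting one-dimensional subgradient back to an element of $\partial^\lambda_Hu(y)$. Consider $\psi:[0,1]\to\R$ defined by $\psi(t)=u(x\delta_th)=\sigma(t)+Q(t)$, where $\sigma(t)=U(x\delta_th)$ is classically convex on $[0,1]$ by h-convexity of $U$ along the horizontal line $t\mapsto x\delta_th$, and $Q(t)=P(x\delta_th)$ is a polynomial in $t$ of degree at most two, because $P$ has h-degree at most $2$ and the graded coordinates of $x\delta_th$ depend polynomially on $t$ with the correct degree bound. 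In particular $\psi$ is Lipschitz, and at each $t$ one has $\psi'_\pm(t)=\sigma'_\pm(t)+Q'(t)$ with $\psi'_-(t)\leq\psi'_+(t)$.

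Set $L=\psi(1)-\psi(0)=u(xh)-u(x)$ and $\varphi(t)=\psi(t)-Lt$. Then $\varphi(0)=\varphi(1)$, so either $\varphi$ is constant (in which case any $t\in[0,1]$ will do) or $\varphi$ has a strict interior extremum at some $t_0\in(0,1)$. A Rolle-type comparison of one-sided increments of $\varphi$ at $t_0$ (distinguishing a maximum, where $\psi'_-\leq\psi'_+$ forces $\psi'_\pm(t_0)=L$, from a minimum, which directly gives $\psi'_-(t_0)\leq L\leq\psi'_+(t_0)$) shows $L\in[\psi'_-(t_0),\psi'_+(t_0)]$. Letting $y=x\delta_{t_0}h$ and $L'=L-Q'(t_0)$, and using $Q'(t_0)=\langle\nabla_HP(y),h\rangle$ from the expansion in Remark~\ref{poly}, this becomes $L'\in[\sigma'_-(t_0),\sigma'_+(t_0)]=[-D_{-h}U(y),D_hU(y)]$, where $D_{\pm h}U(y)$ denote the horizontal right directional derivatives of $U$ along $\pm h$ at $y$.

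The crux is then the \emph{lifting}: producing $\bar p\in\partial_HU(y)$ with $\langle\bar p,h\rangle=L'$. The image $I=\{\langle\bar p,h\rangle:\bar p\in\partial_HU(y)\}$ is a compact interval satisfying $I\subseteq[-D_{-h}U(y),D_hU(y)]$ by definition of $\partial_HU$; the reverse inclusion is where Theorem~\ref{carsubdif} enters, via the identity $\partial_HU(y)=\bar{co}(\nabla_H^\star U(y))$. I combine this with a Fubini argument along the semidirect splitting $\G=N\rtimes Q$ with $Q=\R h$, as in the proof of Theorem~\ref{1carsubdif}: for $\mu_N$-a.e.\ $n\in N$, the convex function $s\mapsto U(n\delta_sh)$ is classically differentiable $\nu_Q$-a.e., and at such points the 1D derivative coincides with $\langle\nabla_HU(n\delta_sh),h\rangle$. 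Averaging the convex one-dimensional slopes on narrow one-sided intervals about $y$ and selecting h-differentiability points therein, then using closure of the h-subdifferential graph (Corollary~\ref{closesub}), yields a limit point $p^\star\in\partial_HU(y)$ with $\langle p^\star,h\rangle=D_hU(y)$; a symmetric construction reaches $-D_{-h}U(y)$, and convexity of $I$ delivers $L'\in I$. With $\bar p$ in hand I set $p=\bar p+\nabla_HP(y)$: Lemma~\ref{hconvexP} places $p$ in $\partial^\lambda_Hu(y)$, and $\langle p,h\rangle=L'+Q'(t_0)=L=u(xh)-u(x)$, as required. The main obstacle is precisely this lifting, which replaces the Hahn--Banach extension available in the Euclidean setting: there one would apply it to the sublinear functional $k\mapsto D_kU(y)$, but subadditivity of horizontal directional derivatives is only established a posteriori in Corollary~\ref{subadd}, so here it must be bypassed using Theorem~\ref{carsubdif} and the Fubini splitting.
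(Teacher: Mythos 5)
Your proposal is correct, but it takes a genuinely different route from the paper's. The paper proves Theorem~\ref{meanval} by smoothing: it takes $C^\infty$ h-convex functions $U_i\to U$ uniformly on compact sets, applies the classical mean value theorem to $u_i=U_i+P$ along the horizontal segment to get $u_i(xh)-u_i(x)=\langle\nabla_Hu_i(x\delta_{t_i}h),h\rangle$, passes to limits $t_i\to t$ and $\nabla_Hu_i(x\delta_{t_i}h)\to p$, and then places $p$ in $\partial^\lambda_Hu(x\delta_th)$ via the stability of subdifferentials under uniform convergence (Proposition~\ref{unifConv}) combined with Lemma~\ref{hconvexP}. You instead work directly with the nonsmooth function: the one-dimensional Rolle argument for $\psi(t)=u(x\delta_th)=\sigma(t)+Q(t)$ (your degree count for $Q$ is right, since $th$ has only first-layer coordinates) yields $L\in[\psi'_-(t_0),\psi'_+(t_0)]$, and the real content is the max formula $D_hU(y)=\max_{\bar p\in\partial_HU(y)}\langle\bar p,h\rangle$ — which in the paper is Corollary~\ref{subadd}, \emph{deduced from} Theorem~\ref{meanval}, so you correctly recognize you cannot quote it and must prove endpoint attainment independently. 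Your sketch of that lifting is sound and completable: on the good lines $s\mapsto v_n(s)=U(n\delta_sh)$ provided by the Fubini splitting of the proof of Theorem~\ref{1carsubdif}, the monotone slope inequality $v_n'(t)\geq\bigl(v_n(t)-v_n(t_0+\epsilon)\bigr)/(t-t_0-\epsilon)$ together with locally uniform convergence $v_n\to v$ lets you select h-differentiability points $x_k\to y$ with $\liminf_k\langle\nabla_HU(x_k),h\rangle\geq v'_+(t_0)=D_hU(y)$, after which closedness of the graph (Corollary~\ref{closesub}; in fact you only need the easy inclusion $\nabla_H^\star U(y)\subseteq\partial_HU(y)$, not all of Theorem~\ref{carsubdif}) produces $p^\star$, the reverse inequality being immediate from the definition of $\partial_HU$. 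Comparing the two: the paper's argument is shorter but hinges on the existence of smooth h-convex approximants (group mollification preserving h-convexity), which it invokes without comment; yours avoids smoothing entirely, at the price of redoing the Fubini machinery of Theorem~\ref{1carsubdif}, and as a by-product establishes Corollary~\ref{subadd} \emph{before} the mean value theorem, reversing the paper's logical order. Two trivial points to patch in a write-up: treat $h=0$ separately (any $p\in\partial_HU(x)+\nabla_HP(x)\neq\emptyset$ works), and drop the word ``strict'' — the interior extremum need not be strict, only of value different from $\varphi(0)=\varphi(1)$.
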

\begin{proof}
Let $U_i$ be a sequence of $C^\infty(\Omega)$ h-convex functions, converging to $U$ uniformly on compact sets. Define $u_i = U_i + P$. For such functions the mean value theorem holds i.e. there exists $t_j \in [0,1]$ such that
\begin{equation}\nonumber
u_i(xh)-u_i(x) = \left\langle \nabla_H u_i(x\delta_{t_i}h), h\right\rangle, \quad [0,h] \subset H_1\cap x^{-1}\Omega.
\end{equation}
Possibly passing to a subsequence we have $t_i \rightarrow t$ and $\nabla_H u_i(x\delta_{t_i}h) \rightarrow p$, thus by the uniform convergence
\begin{equation} \nonumber
u(xh)-u(x) = \left\langle  p, h\right\rangle.
\end{equation}
Our claim follows if we prove that $p \in \partial^\lambda_H u(x\delta_th)$. By Proposition \ref{unifConv}, for every $k > 0$ there exists $i_k$ such that 
\begin{equation}\nonumber
\nabla_H U_i(x\delta_{t_i}h) = \partial_H U_i(x\delta_{t_i}h) \subseteq \partial_H U(x\delta_th) + \frac{1}{k} \mathbb B, \qquad \forall i \geq i_k\,
\end{equation}
Moreover, possibly choosing a larger $i_k$, we have
\begin{eqnarray} \nonumber 
\nabla_H U_i(x\delta_{t_i}h) + \nabla_H P(x\delta_{t_i}h) \subseteq \partial_H U(x\delta_{t}h) + \nabla_H P(x\delta_{t}h) +  \frac{2}{k} \mathbb B, \qquad \forall i \geq i_k\
\end{eqnarray}
By Lemma \ref{hconvexP},  $\partial^\lambda_H u (x) \supseteq \partial_H U(x) + \nabla_H P(x)$ thus the previous inclusion implies that
$$
\nabla_H u_i(x\delta_{t_i}h) = \nabla_H U_i(x\delta_{t_i}h) + \nabla_H P(x\delta_{t_i}h)  \subseteq \partial^\lambda_H u(x\delta_th) + \frac{2}{k} \mathbb B, \qquad \forall i \geq i_k\
$$
then letting $k \rightarrow \infty$ we get that $p \in \partial_H^\lambda u(x\delta_{t}h)$.
\end{proof}
\medskip
As an immediate consequence of the previous theorem, we get the following 

\begin{proof}[Proof of Theorem~\ref{meanV}] 
It suffices to apply Theorem~\ref{meanval} with $P=0$ and $\lambda = 0$. 
\end{proof}
\begin{defin}\rm\label{par+}
Let $u : \Omega \rightarrow \R$ and let $h \in H_1$.  The {\em horizontal directional derivative} 
of $u$ at $x$, along $h$, is given by the limit
\[ 
\lim_{\lambda \rightarrow 0^+} \frac{u(x\delta_\lambda h) - u(x)}{\lambda}\,,
\]
whenever it exists. We denote this derivative by $u'(x,h)$.
\end{defin}
\begin{Cor}\label{subadd}
Let $u$ be an h-convex function in $\Omega$. Then for every $x \in \Omega$
and $h\in H_1$ the horizontal directional derivative $u'(x,h)$ exists and satisfies
\begin{equation}\label{dermax}
u'(x,h)= \max_{p \in \partial_H u(x)} \left\langle p, h\right\rangle,
\end{equation}
hence it is subadditive with respect to the variable $h$. 
\end{Cor}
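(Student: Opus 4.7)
The plan has three parts: first establish that $u'(x,h)$ exists, then prove the two inequalities giving the formula \eqref{dermax}, and finally read off subadditivity.

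For existence, I would exploit the very definition of h-convexity. For $h\in H_1$ fixed and $x\in \Omega$, the map $\phi(t) = u(x\delta_t h)$ is well defined on some open interval $I\ni 0$, and h-convexity of $u$ means exactly that $\phi$ is a classical real convex function on $I$. Since $u$ is locally Lipschitz (Theorem~\ref{rickly}), so is $\phi$, and therefore the right derivative $\phi'_+(0)=u'(x,h)$ exists and is finite.

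Next I would prove the formula \eqref{dermax}. The easy inequality $u'(x,h)\geq \langle p,h\rangle$ for every $p\in\partial_Hu(x)$ comes straight from \eqref{hsub}: writing
\[
\frac{u(x\delta_\lambda h)-u(x)}{\lambda}\geq \langle p,h\rangle
\]
for all $\lambda>0$ small enough and letting $\lambda\to 0^+$ gives $u'(x,h)\geq \sup_{p\in\partial_Hu(x)}\langle p,h\rangle$. For the opposite direction I would apply the nonsmooth mean value theorem (Theorem~\ref{meanV}) to the segment from $x$ to $x\delta_\lambda h$: for each sufficiently small $\lambda>0$ there exist $t_\lambda\in[0,1]$ and $p_\lambda\in\partial_Hu(x\delta_{t_\lambda\lambda}h)$ such that
\[
\frac{u(x\delta_\lambda h)-u(x)}{\lambda}=\langle p_\lambda,h\rangle.
\]
By the local boundedness \eqref{equibound} of the subdifferential (a consequence of local Lipschitzness), the family $\{p_\lambda\}$ is bounded as $\lambda\to 0^+$, so along a subsequence $\lambda_k\to 0^+$ we have $p_{\lambda_k}\to p$ for some $p\in H_1$. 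The base points $x\delta_{t_{\lambda_k}\lambda_k}h$ converge to $x$, so the closed-graph property of $\partial_Hu$ given by Corollary~\ref{closesub} forces $p\in\partial_Hu(x)$. Passing to the limit in the mean-value identity,
\[
u'(x,h)=\langle p,h\rangle\leq \max_{q\in\partial_Hu(x)}\langle q,h\rangle,
\]
which, combined with the previous inequality, yields \eqref{dermax} and shows that the supremum is attained.

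Finally, subadditivity in $h$ is automatic: if $h,k\in H_1$ and $p\in\partial_Hu(x)$ realises the maximum for $h+k$, then
\[
u'(x,h+k)=\langle p,h\rangle+\langle p,k\rangle\leq u'(x,h)+u'(x,k)
\]
by applying \eqref{dermax} to $h$ and to $k$ separately. I do not see a serious obstacle here; the only step that requires the machinery of this paper, rather than one-variable convexity, is the mean value theorem together with the closed-graph property of $\partial_Hu$, and both are already at our disposal.
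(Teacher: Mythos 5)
Your proposal is correct and follows essentially the same route as the paper's own proof: the lower bound comes from the subdifferential inequality \eqref{hsub}, and the upper bound from the nonsmooth mean value theorem (Theorem~\ref{meanV}) together with the equiboundedness \eqref{equibound} of nearby subdifferentials and the closed-graph property of $\partial_H u$ (Corollary~\ref{closesub}). The only difference is cosmetic: you spell out the one-variable convexity argument for the existence of $u'(x,h)$ and the subadditivity of the max of linear functionals, both of which the paper leaves implicit.
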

\begin{proof}
The h-convexity of $u$ implies the existence of $u'(x,h)$ for any $x\in \Omega$ and $h\in H_1$. 
Let $p_0 \in \partial_Hu(x)$, such that $\displaystyle \left\langle p_0,h\right\rangle = \max_{p \in \partial_H u(x)} \left\langle p, h\right\rangle$. By definition of $\partial_Hu(x)$, 
$$
u(x\delta_\lambda h) \geq u(x) +  \left\langle p_0, \lambda h\right\rangle, \qquad \mbox{whenever } [0,\lambda h] \subset x^{-1}\Omega\cap H_1.
$$
Then we easily get that
$$
\lim_{\lambda \rightarrow 0^+} \dfrac{u(x\delta_\lambda h) - u(x)}{\lambda} \geq \left\langle p_0, h\right\rangle.
$$
Notice that, for $\lambda$ small enough, $[0,\lambda h] \subset x^{-1}\Omega\cap H_1$, hence we can apply Theorem~\ref{meanval}. Then for every $\lambda$ there exist $c(\lambda) \in [0,1]$ and $p(\lambda) \in \partial_Hu(x\delta_{c(\lambda)\lambda} h)$ such that
$$
\dfrac{u(x\delta_\lambda h) - u(x)}{\lambda} = \left\langle p(\lambda), h\right\rangle.
$$
Now fix a sequence $\lambda_i \rightarrow 0$ such that $p(\lambda_i) \rightarrow\bar p$, then by the closure property of the subdifferential we get $\bar p \in \partial_H u(x)$. Moreover, the existence
of the following limit gives
$$
\lim_{\lambda \rightarrow 0^+} \dfrac{u(x\delta_\lambda h) - u(x)}{\lambda} = 
\left\langle \bar p, h\right\rangle \leq \max_{p \in \partial_H u(x)} \left\langle p, h\right\rangle,
$$
concluding the proof.
\end{proof}
\begin{proof}[Proof of Theorem~\ref{UniqueSub}]
By Proposition \ref{genprop}, there exist $w_s \in \H_1$, $s = 1,\ldots, \gamma$ and and $U\subset \R^\gamma$ open bounded neighbourhood of the origin such that, given $w \in \bbG$, $\|w\| = 1$ then $w = \prod_{s=1}^\gamma a_s w_s$, for an $a \in U$. Fix $r>0$ such that $B_{0,r} \subset x^{-1}\Omega$ and let $M$ be as in Proposition \ref{genprop}. Define the h-convex function 
\begin{equation}\nonumber
g(y) = u(xy)-u(x) - \left\langle p, y \right\rangle, \qquad y \in x^{-1}\Omega.
\end{equation}
Fix $\rho_0 >0$ such that $\rho_0 M < r$. Then for every $\rho < \rho_0$, by 
Theorem~\ref{meanV} and the generating property, we have
\begin{equation}\nonumber
g(\delta_\rho w) = \sum_{s =1}^\gamma \left\langle p_s,\rho a_s w_s\right\rangle - \left\langle p,\rho a_s w_s\right\rangle 
\end{equation}
where $ p_s \in \partial_H u \left(x \delta_\rho(\prod_{k=1}^{s-1} a_k w_k) \delta_{t_s}\delta_\rho a_s w_s\right)$ with $t_s \in [0,1]$. By Proposition \ref{unifConv}, for every $\epsilon >0$ there exists $\rho_0$ such that 
\begin{equation}\nonumber
\partial_H u \left (x\delta_\rho(\prod_{k=1}^{s-1} a_s w_k ) \delta_{t_s}\delta_\rho a_s w_s \right) \subseteq \partial_H u (x) + \epsilon \mathbb B = \{p\} + \epsilon \mathbb B \qquad \forall \rho < \rho_0, \quad s = 1,\ldots,\gamma. 
\end{equation}
Thus $\vert g(\delta_\rho w) \vert  \leq C \gamma \epsilon \rho$ or equivalently $\dfrac{\vert g(\delta_\rho w) \vert}{\rho}$ converges to zero uniformly  in $w$.
\end{proof}
\vskip 0,3 cm
\section{Second order differentiability}
%
%
%
%
%
\begin{rem}\label{Polydiff}\rm
If \eqref{alex} holds for $u$ at $x\in \Omega$, then $u$ is h-differentiable at $x$. In fact we can rewrite \eqref{alex} as $u(xw) - P_x^{(0)}(w)-P_x^{(1)}(w) = P_x^{(2)}(w) + o(\|w\|^2)$.
Clearly $P_x^{(0)}(w) = u(x)$ and $P_x^{(1)}(w)$ is an h-linear map. Thus 
$| u(xw) - u(x)- P_x^{(1)}(w) | = o(\|w\|)$ and the h-differentiability of $u$ follows. 
Moreover by the uniqueness of the h-differential we get that $P_x^{(1)}(w) = \left\langle \nabla_H u(x),w \right\rangle$. 
\end{rem}
As in \cite{Rock00}, we introduce the difference quotients of convex functions.
\begin{defin}[Difference quotients, \cite{Rock00}]\rm
Let $u :\Omega \rightarrow \R$ be h-convex and assume that it is h-differentiable at $x$.
Let $\tau>0$ and define the {\em second h-differential quotient} $\Delta^2_{x,\tau} u$
at $x$ as follows 
\begin{equation}\label{secondDiff}
\Delta^2_{x,\tau}  u(w)=\dfrac{u(x\delta_\tau w) - u(x) - \tau \left\langle \nabla_H u(x), w \right\rangle}{\tau^2}\,.
\end{equation}
Then the h-difference quotient of the subdifferential mapping is given by the
set-valued mapping 
\begin{equation}\label{secondSub}
\Delta_{x,\tau} \partial_H u : w \rightrightarrows \dfrac{\partial_H u(x\delta_\tau w) - \nabla_H u(x)}{\tau}.
\end{equation}
\end{defin}
\begin{rem}\rm
Notice that $\Delta^2_{x,\tau} u$  can be written as
\begin{equation}\nonumber
\Delta^2_{x,\tau} u(w) =   \tau^{-1}\left[ u_{x,\tau}(w) - \left\langle  \nabla_H u(x), w\right\rangle \right] 
\end{equation}
where $u_{x,\tau}(w) = \tau^{-1} \left\lbrace u(x\delta_\tau w) - u(x)\right\rbrace $ and $u_{x,\tau}$ is clearly h-convex. Moreover if we take the subdifferential of $\Delta^2_{x,\tau}u$ we get
\begin{eqnarray}\label{deltasq}
\partial_H \left[ \Delta^2_{x,\tau} u(w)\right] & =& \tau^{-1} \left\lbrace \partial_H u_{x,\tau}(w) - \nabla_H u(x)\right\rbrace \\ \nonumber
&  = & \tau^{-1} \left\lbrace \partial_H u(x\delta_\tau w) - \nabla_H u(x)\right\rbrace \\
& = & \Delta_{x,\tau} \partial_H u (w) \nonumber .
\end{eqnarray}
where the equality $\partial_H u_{x,\tau}(w) = \partial_H u(x\delta_\tau w)$ follows from the definition of $u_{x,\tau}$.
\end{rem}
\medskip
\begin{proof}[Proof of Lemma~\ref{MIeqGR}]
Choosing $w=0$ we get $\partial_Hu(x) = \{v\}$, thus by Theorem~\ref{UniqueSub}, $u$ is h-differentiable at $x$, moreover $v = \nabla_H u(x)$. The twice h-differentiability immediately follows from \eqref{mignot}, taking its restriction to all h-differentiability points. 
For the converse implication, we rewrite expansion \eqref{difgrad} as follows, for all $\epsilon >0$ there exists $\rho >0$ such that 
\begin{equation}\label{inc1}
\left\vert \dfrac{\nabla_H u(xh) - \nabla_H u(x) - A_x(h)}{\|h\|}  \right\vert  \leq \epsilon \qquad \|h\| < \rho.
\end{equation}
for all $h \in x^{-1}\Omega$ such that $u$ is h-differentiable at $xh$.
By \eqref{limgrad}, for any $w \in x^{-1}\Omega \cap B_{0,\rho}$, taking into account \eqref{inc1}, we get 
\begin{equation}\nonumber
\left \vert  \dfrac{p - \nabla_H u(x) - A_x(w)}{\|w\|} \right \vert  \leq \epsilon  \qquad \mbox{for all } p \in \nabla_H^\star u(xw).
\end{equation}
In an equivalent form, we have
\begin{equation}\label{inclstar}
\nabla_H^\star u(xw) \subseteq \nabla_H u(x) + A_x(w) + \epsilon \|w\|\mathbb B.
\end{equation}
Moreover, the set on the right is convex thus, Theorem \ref{carsubdif} yields
\begin{equation}
\partial_H u(xw) = \bar{co}\left( \nabla_H^\star u(xw)\right)  \subseteq \nabla_H u(x) + A_x(w) + o(\|w\|)\mathbb B.
\end{equation}
This leads us to the conclusion.
\end{proof}
\begin{Cor}\label{eqMi}
$u$ is twice h-differentiable at $x$ if and only if, for any bounded set $W \Subset \Omega$, for all $\epsilon > 0$ there exists $\delta > 0$  such that
for all $w \in W$ and $\tau \in (0,\delta)$ we have 
\begin{equation}\label{equationMi}
\emptyset \neq \Delta_{x,\tau} \partial_H u (w) - A_x(w) \subseteq \epsilon \mathbb B.
\end{equation}
\end{Cor}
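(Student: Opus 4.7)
The plan is to observe that Corollary~\ref{eqMi} is simply a rescaled rephrasing of Lemma~\ref{MIeqGR}, the bridge being the h-homogeneity $A_x(\delta_\tau w)=\tau A_x(w)$ of the h-linear mapping $A_x$ together with the algebraic identity
\[
\Delta_{x,\tau}\partial_H u(w)-A_x(w)=\tau^{-1}\bigl(\partial_H u(x\delta_\tau w)-\nabla_H u(x)-A_x(\delta_\tau w)\bigr)
\]
coming from the definition \eqref{secondSub}. The only real work is translating the $o(\|h\|)$ decay in \eqref{mignot} into uniform smallness on bounded sets of $w$ and tracking the nonemptiness carefully.

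For the forward implication, assume $u$ is twice h-differentiable at $x$. Lemma~\ref{MIeqGR} yields $v=\nabla_H u(x)$ and the inclusion $\partial_H u(xh)\subseteq \nabla_H u(x)+A_x(h)+o(\|h\|)\mathbb B$ for every $h\in x^{-1}\Omega$. Given a bounded test set $W$ with $R=\sup_{w\in W}\|w\|$, substitute $h=\delta_\tau w$ (permissible for $\tau$ small enough that $x\delta_\tau w\in\Omega$ uniformly in $w\in W$), and divide through by $\tau$; this yields
\[
\Delta_{x,\tau}\partial_H u(w)-A_x(w)\subseteq \tau^{-1}o(\tau R)\,\mathbb B,
\]
and the right-hand side sits inside $\epsilon\mathbb B$ uniformly in $w\in W$ once $\tau$ is sufficiently small. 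The nonemptiness $\Delta_{x,\tau}\partial_H u(w)\neq\emptyset$ amounts to $\partial_H u(x\delta_\tau w)\neq\emptyset$; by Theorem~\ref{carsubdif} this equals $\bar{co}(\nabla_H^\star u(x\delta_\tau w))$, which is nonempty at every point of $\Omega$ because Pansu's theorem gives a.e.\ h-differentiability of the locally Lipschitz function $u$ (so $\nabla_H^\star u(y)\neq\emptyset$ everywhere in $\Omega$).

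For the reverse implication, take the bounded test set $W=\{w\in\G:\|w\|\leq 1\}$. Given any $h\in x^{-1}\Omega$ with $\|h\|<\delta$, set $\tau=\|h\|$ and $w=\delta_{1/\tau}h$, so that $\|w\|=1$ and $w\in W$; the hypothesis \eqref{equationMi} reads
\[
\emptyset\neq \partial_H u(xh)-\nabla_H u(x)-\tau A_x(w)\subseteq \epsilon\tau\,\mathbb B.
\]
Since $\tau A_x(w)=A_x(\delta_\tau w)=A_x(h)$ and $\epsilon\tau=\epsilon\|h\|$, this says exactly
\[
\emptyset\neq \partial_H u(xh)\subseteq \nabla_H u(x)+A_x(h)+\epsilon\|h\|\,\mathbb B,
\]
which is \eqref{mignot} with $v=\nabla_H u(x)$. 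Lemma~\ref{MIeqGR} then gives twice h-differentiability of $u$ at $x$.

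The argument is essentially a change of variables, so I do not expect a real obstacle. The one subtle point worth highlighting is the bookkeeping of nonemptiness: in the forward direction it relies on Theorem~\ref{carsubdif} combined with Pansu's theorem, while in the reverse direction it is built into the hypothesis of the corollary and so matches the corresponding nonemptiness requirement of Lemma~\ref{MIeqGR}.
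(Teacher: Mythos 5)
Your proposal is correct and follows essentially the same route as the paper: both directions are the rescaling $h=\delta_\tau w$ combined with the h-homogeneity $A_x(\delta_\tau w)=\tau A_x(w)$, with the forward direction reading off \eqref{mignot} from Lemma~\ref{MIeqGR} on a bounded set $W$ and the reverse direction testing \eqref{equationMi} on the unit sphere to recover \eqref{mignot}. Your explicit bookkeeping of the nonemptiness via Theorem~\ref{carsubdif} and the everywhere nonemptiness of $\partial_H u$ is a point the paper leaves implicit (it is recorded in the remark following the proof of Theorem~\ref{carsubdif}), but it does not change the argument.
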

\begin{proof}
Let $u$ twice h-differentiable at $x$, fix a bounded set $W \Subset \Omega$ and $\epsilon >0$. Then there is $\rho(\epsilon) > 0$ such that 
\begin{equation}\nonumber
\partial_Hu(xw) \subset \nabla_H u(x) + A_x(w) +   \|w\|\epsilon \mathbb B, \qquad \|w\| < \rho(\epsilon).
\end{equation}
If $w= \delta_\tau h$, with $h \in W$, then for $\tau < \frac{\rho(\epsilon)}{\mbox{diam}(W)}$
$$
\partial_Hu(x\delta_\tau h) \subset \nabla_Hu(x) + \tau A_x(h) + \epsilon \tau \mbox{diam}(W)\mathbb B
$$
which is equivalent to \eqref{equationMi}.
Conversely, suppose that \eqref{equationMi} holds for $W = \{w \in \G,\ \|w\| = 1\}$ and $\epsilon>0$ fixed. Then there exists $\delta >0$ such that for every $\tau>0$
$$
\dfrac{\partial_Hu(x\delta_\tau w) - \nabla_H u(x)}{\tau} - A_x(w) \subseteq \epsilon \mathbb B.
$$
Notice that the previous inclusion holds for every $\|h\| \leq \delta$, i.e.
$$
\partial_Hu(x h) \subseteq \nabla_H u(x) + A_x(h) + \epsilon \|h\|\mathbb B,
$$
this concludes the proof.

\end{proof}

\vskip 0,4 cm

\begin{proof}[Proof of Theorem \ref{equivalence}]
Define $\phi(w) := P_x^{(2)}(w)$ to be the 2-homogeneous part of $P_x$, notice that $\nabla_H P_x^{(2)}(w)$ is an h-linear map, since it is a polynomial of homogeneous degree 1. Let us show that $U_{x,\tau} := \Delta^2_{x,\tau} u$ uniformly converges on compact sets to $\phi$. We, fix a compact set $K \subseteq \Omega_x$, and consider the difference $U_{x,\tau}- \phi(w)$. By Remark \ref{Polydiff}, we get
\begin{equation}\label{2exp}
U_{x,\tau}- \phi(w)= \dfrac{u(x\delta_\tau w) - P_x^{(0)}(\delta_\tau w)- P_x^{(1)}(\delta_\tau w) - P_x^{(2)}(\delta_\tau w)}{\tau^2} = \dfrac{o(\|\delta_\tau w \|^2)}{\tau^2}.
\end{equation}
Moreover $U_{x,\tau}$ is h-convex, then so is $\phi$. By Proposition \ref{unifConv}, for every compact set $W \subset \Omega$ and $\epsilon >0$ there exists $\gamma > 0$ such that
\begin{equation}\nonumber
\partial_H U_{x,\tau}(w) \subseteq \nabla_H \phi(w) + \epsilon \mathbb B, \quad
\mbox{for all}\quad w \in W\quad\mbox{and}\quad \tau \in (0,\gamma).
\end{equation}
Notice that \eqref{deltasq}, gives
\[
\partial_H U_{x,\tau} (w) = \Delta_{x,\tau} \partial_H u (w).
\]
Thus, taking into account that $\phi = P_x^{(2)}$. It follows that 
\[
\Delta_{x,\tau}\left[ \partial_H u\right](w) \subseteq \nabla_H{P^{(2)}_x}(w) + \epsilon \mathbb B,
\]
hence
$\Delta_{x,\tau}\left[ \partial_H u \right](w) - \nabla_H{P^{(2)}_x}(w)  \subset \epsilon \mathbb B.$
By Corollary \ref{eqMi}, $u$ is twice h-differentiable.

Now, we assume that $u$ is twice h-differentiable at $x$. 
Then Lemma~\ref{MIeqGR} give us an h-linear $A_x$ such that
$$
\nabla_H u(xw) = \nabla_H u(x) + A_x(w) + o(\|w\|).
$$
Recall that by Proposition \ref{genprop} we can find an integer $\gamma$, and an open bounded neighbourhood of the origin $U \subset \R^\gamma$ such that 
\begin{equation}\nonumber
W = \left\lbrace \prod_{i=1}^\gamma  a_s w_{i_s},\quad a \in U,\ w_{i_s} \in  H_1\right\rbrace \supset  B_{0,1}.
\end{equation}
Define $v$ as $v(w) = u(xw)-u(x)- P_x(w)$
where $P_x(w)$ is the unique polynomial, with h-$\mbox{deg}P \leq 2$, such that 
\begin{equation}\label{nablaP}
\nabla_H P_x(w) = \nabla_H u(x) + A_x w
\end{equation}
and $P_x(0) = 0$, as a consequence of Remark \ref{uniqueP}.
Let $r>0$, such that $B_{0,r} \subset x^{-1}\Omega$ and define $M$ as in Proposition \ref{genprop}. Let $\rho_0$ such that $\rho_0 M < r$ and consider $w$, $\|w\| = 1$. Then for every $\rho < \rho_0$,  $v(\delta_\rho w) = v(\delta_\rho w)-v(0)$ can be written as
\begin{equation}\nonumber
v(\delta_\rho  w) = \sum_{s=1}^\gamma v(\prod_{l=1}^s \delta_\rho a_{i_l} w_{i_l}) - v(\prod_{l=1}^{s-1}\delta_\rho a_{i_l} w_{i_l}).
\end{equation}
Observe that $v$ is an h-convex function plus a polynomial of homogeneous degree less than or equal to 2, thus by Theorem \ref{meanval} applied to horizontal directions $w_s$ we get
\begin{equation}\nonumber
v(\delta_\rho w) = \sum_{i=1}^\gamma \left\langle p_s , \delta_\rho a_s w_s \right\rangle 
\end{equation}
with $p_s \in \partial^{\lambda}_H v\left(x\delta_\rho (\prod_{i=1}^{s-1} a_i w_i) \delta_{t_s} \delta_\rho a_s w_s\right)$, $\displaystyle \lambda = \max_{\| h \| =1} \vert P_x^{(2)}(h) \vert $, for suitable $t_s \in [0,1]$. Moreover, by Proposition \ref{hconvexP2} we know that 
\begin{equation}\label{th2e1}
 p_s  + \nabla P_x \left(\delta_\rho(\prod_{i=1}^{s-1} a_i w_i )\delta_{t_s} \delta_\rho a_s w_s\right) \in \partial_H u\left(x\delta_\rho(\prod_{i=1}^{s-1}a_i w_i) \delta_{t_s}\delta_\rho a_s w_s\right) .
\end{equation}
The expansion \eqref{mignot} for the h-subdifferential of $u$ implies that
\begin{eqnarray}\label{th2e2}
\partial_H u \left(x \delta_\rho(\prod_{i=1}^{s-1}a_i w_i) \delta_{t_s}\delta_\rho a_s w_s\right) &\subset& \nabla_H u(x) + A_x \left(\delta_\rho(\prod_{i=1}^{s-1}a_i w_i) \delta_{t_s}\delta_\rho a_s w_s\right)\\ \nonumber
 & &+ o\left( \vert \delta_\rho( \prod_{i=1}^{s-1} a_i w_i) \delta_{t_s} \delta_\rho a_s w_s \vert\right) \mathbb B,
\end{eqnarray}
thus by the choice of $P_x$ and taking into account \eqref{th2e1} and \eqref{th2e2}, we get that 
$$\vert p_s \vert  = o\left( \vert \delta_\rho(\prod_{i=1}^{s-1}a_i w_i)  \delta_{t_s} \delta_\rho  a_s w_s \vert\right) = o (\rho). 
$$ 
As a consequence, $\vert v(\delta_\rho w) \vert  = o(\rho^2)$ and our equivalence is achieved. 

Finally, we have to prove claims (1), (2) and (3). 
The first one follows considering the restriction of \eqref{2exp} to directions $w\in V_2$
and taking into account \eqref{exppol}, hence getting
the uniform limit
\[
\frac{u(x\cdot \exp(t^2 W))-u(x)-t^2\lan\nabla_{V_2}P_x^{(2)},w\ran}{t^2} \lra 0
\]
as $t\to0^+$, where $w$ varies in a compact neighbourhood of zero in $V_2$.
In fact, we have used the equality 
\[
x\delta_tw=x\cdot\delta_t\exp(W)=x\cdot\exp(t^2W),
\]
where $W$ is the unique left invariant vector field such that $W(0)=w$.
In particular, we have $\nabla_{V_2}u(x)=\nabla_{V_2}P$.
Taking into account Remark~\ref{Polydiff} and formula \eqref{exppol}, then claim (2) follows.
Now, with respect to the fixed basis $(e_1,\ldots,e_n)$ of $\G$, we have coefficients $(A_x)_j^i$
such that
\[
A_xw=\sum_{i,j=1}^{m_1} (A_x)_j^i\,w_i\,e_j\,,
\]
therefore \eqref{nablaP} yields $\nabla_HP_x^{(2)}(w)=A_xw$. Precisely, for any $j=1,\ldots,m_1$, we have
\[
X_jP^{(2)}_x(w)=\sum_{i=1}^{m_1}(A_x)^i_j\,w_i\,,
\]
then formula \eqref{alij} gives
\[
X_iX_jP^{(2)}_x=(A_x)^i_j=(\nabla_H^2P^{(2)}_x\big)_{ij}+\sum_{l=m_1+1}^{m_2} X_lu(x)\;a^{li}_j\,.
\]
As a result, we get
\[
(\nabla_H^2P^{(2)}_x)_{ij}=(A_x)^i_j-\sum_{l=m_1+1}^{m_2} X_lu(x)\;a^{li}_j\,,
\]
that coincides with the formula of claim (3). Finally, we observe that $P_x^{(2)}$ is the uniform
limit on compact sets of the h-convex functions $U_{x,\tau}$. This implies that
$P_x^{(2)}$ is also h-convex and then its symmetrized horizontal Hessian is nonnegative.
\end{proof}

\end{document}